\newcommand\mytoday{\number\year-\ifcase\month\or 01\or 02\or 03\or 04\or 05\or 06\or 07\or 08\or 09\or 10\or 11\or 12\fi-\ifcase\day\or 01\or 02\or 03\or 04\or 05\or 06\or 07\or 08\or 09\or 10\or 11\or 12\or 13\or 14\or 15\or 16\or 17\or 18\or 19\or 20\or 21\or 22\or 23\or 24\or 25\or 26\or 27\or 28\or 29\or 30\or 31\fi} 
\newcolumntype{d}[2]{D{.}{.}{#1.#2}} 
\newcommand*{\abstractnoindent}{} 
\let\abstractnoindent\abstract
\renewcommand*{\abstract}{\let\quotation\quote\let\endquotation\endquote
\abstractnoindent}
\renewcommand{\p@enumii}[1]{\theenumi(#1)}
\theoremstyle{break} 
\newtheorem{definition}{Definition}[section] 
\newtheorem{lemma}[definition]{Lemma}
\newtheorem{theorem}[definition]{Theorem}
\newtheorem{corollary}[definition]{Corollary}
\newtheorem{proposition}[definition]{Proposition}
\newtheorem{conditions}{Conditions}
\newtheorem{remark}[definition]{Remark}
\newtheorem{example}[definition]{Example}
\theoremstyle{nonumberbreak} 
\newtheorem{proof}{Proof}
\newcommand{\tild}{~}
\newcommand{\ee}[1]{\operatorname{e}^{#1}}
\newcommand{\OA}{\mathcal{A}}
\newcommand{\OF}{\mathcal{F}}
\newcommand{\OG}{\mathcal{G}}
\newcommand{\OM}{\mathcal{M}}
\newcommand*{\IR}{\mathbb{R}}
\newcommand*{\rr}{\mathbb{R}}
\newcommand*{\rrd}{\mathbb{R}^d}
\newcommand*{\IN}{\mathbb{N}}
\renewcommand*\d{\mathop{}\!\mathrm{d}}
\newcommand{\Dt}{\Delta t}
\newcommand{\orderA}{2\varrho}
\newcommand{\orderAhalf}{\varrho}
\newcommand{\norm}[1]{{\left\Vert #1 \right\Vert}}
\numberwithin{equation}{section}
\begin{document}

\thispagestyle{empty}
	\begin{center}
	{\bfseries\Large Stability and convergence of Galerkin schemes for parabolic equations with application to
	Kolmogorov pricing equations in time-inhomogeneous L\'evy models\\\vspace{.1cm} (as of \today)}
		\par\bigskip
		\vspace{0.1cm}
	{\Large Maximilian Ga\ss\, 
	and  Kathrin Glau\footnote{The authors thank Oleg Reichmann and Linus Wunderlich for fruitful discussions.}}\\
		{email: k.glau@qmul.ac.uk,}
	\end{center}
\vspace{0.1cm}

	\begin{abstract}
Two essential quantities for the analysis of approximation schemes of evolution equations are \textit{stability} and \textit{convergence}. We derive stability and convergence of fully discrete approximation schemes of solutions to linear parabolic evolution equations governed by \textit{time dependent coercive operators}. We consider abstract Galerkin approximations in space combined with theta-schemes in time. The level of generality of our analysis comprises both a large class of time-dependent operators and a large choice of approximating Galerkin spaces. In particular the results apply to partial integro differential equations for option pricing in time-inhomogeneous L\'evy models and allows for a large variety of option types and models. The derivation builds on the strong foundation laid out by \cite{PetersdorffSchwab2003} who provide the respective results for the time-homogeneous case. We discuss the assumptions in the context of option pricing.
	\end{abstract}
\minisec{Keywords}
	PIDE methods, convergence analysis, time dependent integrodifferential operator, finite elements, finance
	stability estimates, convergence analysis, Galerkin scheme, finite elements, time dependent operator
	option pricing, 
time-inhomogeneous L\'evy processes, additive processes, Sato processes	
	
\minisec{MSC2010 subject classification} 
	65M12, 
 	65M60  
	91G80, 
	60G51,
	35S10, 
		  91B25  

\section{Introduction}
Solving partial (integro) differential equations (PI)DEs) is---besides Monte Carlo and Fourier techniques---one of the fundamental approaches to compute financial quantities in asset models based on jump processes. These quantities include option prices, sensitivities, risk measures and optimal investment strategies. While Monte Carlo type simulations typically are two slow for real-time evaluations, Fourier methods are known to be very efficient. These type of methods are, however, not directly applicable to  exotic options. PIDE methods have the potential to 
\begin{enumerate}
\item be flexible in the model choice,
\item be flexible in the option type,
\item allow for a thorough error control.
\end{enumerate}
An emerging field combines deep neural network approximations and P(I)DE techniques to compute option prices in high-dimensional settings, i.e.\ for a large number of dependent stochastic factors, see for instance \cite{HanJentzenE2018}, \cite{HanJentzenE2018_2}. Merging both techniques promises to merry the lucid mathematical insight in the error behaviour of PDE methods with the spectacular approximation power of machine learning, see for instance the recent analytic achievements of \cite{OpschoorPetersenSchwab2019}, and \cite{GononSchwab2020}. The breadth of this development justifies the further development of the error analysis of P(I)DE techniques, which can serve as a building block in the analysis of combined techniques.


Two essential ingredients of such an analysis are \emph{stability estimates} and the analysis of \emph{convergence rates}. A numerical scheme is said to be stable if the normed solution is bounded by the normed right-hand side and the initial condition as inputs. For linear equations this directly implies a robustness assertion, namely small perturbations of the inputs lead to small perturbations of the discrete solution. To ensure reliability it is therefore necessary that the scheme satisfies stability. Moreover, stability estimates typically are a key step in the derivation of the asymptotic rate of convergence of the scheme. The latter provides an assertion on the efficiency of the scheme and can be used to test the implementation, also in the frequent situation where no exact solution is available.

In this article, we advance the classical error analysis of PIDEs for option pricing. We consider the discretization of PIDEs with a Galerkin scheme in space and a theta Euler scheme in time. The loss distribution of individual assets typically displays fat tails. Moreover, large losses often are induced by general market shocks, as we experience during the current pandemic, and impact a large number of business entities simultaneously. Therefore we focus on L\'evy type models, which allow to capture both fat tails of log-returns and the dependence of large losses, i.e.\ tail dependence. Both features are absolutely essential in order to reproduce the risk of losses in a realistic manner. L\'evy type models prove to reproduce financial data highly successfully, particularly if the model displays one additional feature, namely non-stationarity of returns, see for instance \cite{EberleinKluge2006b} and\cite{EberleinOezkan2005}, \cite{CrepeyGrbacNgorSkovmand2015}, \cite{EberleinMadan2009} for valuation of structured products and  \cite{KokholmNicolato2010} for a model default probabilities. The class generalizing L\'evy processes to non-stationarity of increments is called time-inhomogeneous L\'evy processes, or processes with independent increments and absolutely continuous characteristics (PIIAC), additive processes and Sato processes in the literature. 



We therefore keep our analysis is kept in such a generality that it allows to jointly treat
\begin{enumerate}
\item[--] multivariate asset models (the dimension $d$ is acts as a parameter), 
\item[--] time-inhomogeneous L\'evy models,
\item[--] European and for instance barrier option types. 
\end{enumerate}
In order to achieve this level of generality, we exploit the Hilbert space structure of the Galerkin approach. 

In this article we derive stability and convergence results for finite element and more general Galerkin methods for parabolic evolution equations that in particular arise for option pricing in \emph{time-inhomogeneous L\'evy models}. 
Time-inhomogeneity of the modelling process translates to time dependence of the operator governing the evolution equation for option pricing. We therefore are dealing with parabolic evolution equations with \textit{time-dependent operators}.

Inspired by applications of the fractional Brownian motion in finance, \cite{Reichmann2011} examines evolution problems with time-dependent coefficients that  exhibit a technically challenging degeneracy at the initial time point. Here, we argue that the assumption of non-degenerate coefficients is widely applicable for pricing equations in finance. First, empirical evidence for such a degeneracy seems not to be available. Second, the case of piecewise constant coefficients that is frequently used in finance falls in this scope. Third, further examples and construction principles leading to time-inhomogeneous L\'evy processes with non-degenerate infinitesimal generators are provided in Example 7.6 by \cite{EberleinGlau2014} and in Section 4.3 by \cite{Glau2016}.
Finally, the class of Sato processes used in \cite{EberleinMadan2009} that falls out of this scope, has originally been introduced to better capture the distributional behaviour of the asset data for \textit{large maturities}. Therefore the modification of the model has been suggested in Example 7.7  by \cite{EberleinGlau2014}. The coefficients of the inifitesimal generator of the modified process are non-degenerate. 

The assumption of non-degeneracy allows us to use the techniques provided by \cite{PetersdorffSchwab2003}, who proved the respective assertions under for time-homogeneous operators.  
In order to contribute to a reliable application of the scheme, we particularly derive the arising time-step size condition in explicit form, keeping track of all constants involved. 
This analysis shows that, in contrast to the time-homogeneous case, the stability estimates and by consequence the time-step size conditions depend on the continuity and coercivity constant.

On the technical side, the main difference from our proofs compared to the proofs by \cite{PetersdorffSchwab2003} is the following. Their analysis strongly relies on the energy norm. For a time-dependent operator, however, the energy norm is replaced by a time-dependent family of norms with each member being an energy norm related to the operator at a fixed time point. Since dealing with a family of norms is rather cumbersome at some places, we define an appropriate auxiliary norm. 
To underline the high level of generality and in particular the separation of assumptions on the solution from assumptions on the approximation property of the Galerkin spaces, we base our analysis on general solution spaces and express the approximation property in a functional way. 

As our main results we establish stability and convergence rates for linear evolution equations governed by generators of \textit{time-inhomogeneous} L\'evy processes. In this article assume \textit{coercivity} of the resulting bilinear form. Typically, in financial applications we require the more general case of a non-coercive bilinear form that satisfy a G{\aa}rding inequality. 
We treat this general case in the follow-up paper \cite{GassGlau2020b}. The proof is considerably more involved and builds on the results provided in this article.  

The main contribution of the article is twofold.
Firstly, we extend the stability and convergence analysis provided in \cite{PetersdorffSchwab2003} to the time-in\-homo\-ge\-neous case. When trying to adapt \citeauthor{PetersdorffSchwab2003}'s proof, a technical complication arises from the energy norm $\norm{v_h}_{a}$ becoming time-dependent. This compels us to deviate from their concept of proof. A notable feature of our new proof is that our main results do no longer require the inverse property, one of the essential assumptions in \cite{PetersdorffSchwab2003}. In addition, we provide the final convergence results in terms of the projection error, which applies generally to choices of function spaces for the exact and the approximate solution. 
Secondly, we discuss all assumptions in regards to applicability in finance. Particularly, we present examples of function spaces for the exact and the approximate solution, which arise naturally for problems in finance.



The remainder of the article is organized as follows. 
In Section \ref{sec-pb}, we present the variational formulation of the evolution equation and the fully discrete solution scheme that we investigate. Moreover, we formulate the assumptions on the scheme and discuss them in the light of the application to option pricing in time-inhomogeneous L\'evy models. In order to make both the theoretical results as well as the concepts accessible to the community of financial mathematicians, we carefully introduce the mathematical objects. We also discuss all assumptions in the light of financial applications. In the subsequent Section \ref{sec-stab} we derive the stability estimate for the fully discrete scheme. The final Section \ref{sec:ConvCoercive} provides the convergence analysis. For the readers convenience we present all proofs that are omitted in the main sections in the appendix.

	\section{Problem Formulation}\label{sec-pb}
We first introduce the weak formulation of the following problem: Find solutions $u:[0,T]\times \IR^d \rightarrow \IR$ of the evolution equation
	\begin{equation}
	\label{eq:evol}
	\begin{split}
		\partial_t u + \mathcal{A}_t u =&\ f,\qquad\text{for almost all $t\in(0,T)$}\\
		u(0) =&\ g,
	\end{split}
	\end{equation}
with $\mathcal{A} = (\mathcal{A}_t)_{t\in[0,T]}$ a time-inhomogeneous \emph{Kolmogorov operator}, a \emph{source term} or \emph{right hand side} $f:[0,T]\times\IR^d\rightarrow \IR$ and an \emph{initial condition} $g:\IR^d \rightarrow \IR$.	

Convergence results typically hinge on the degree of regularity of the solution. We incorporate this in our notation by indexing the solution spaces $V^\varrho$ with a regularity parameter $\varrho>0$. 	
	Following the classical way to define solution spaces of parabolic evolution equations and fixing an index $\varrho>0$, we introduce a \emph{Gelfand triplet} $(V^\varrho,H,(V^\varrho)^\ast)$, which consists of a pair of separable Hilbert spaces $V^\varrho$ and $H$ and the dual space $(V^\varrho)^\ast$ of $V^\varrho$ such that there exists a continuous embedding from $V^\varrho$ into $H$.
We denote by $L^2\big(0,T; H\big)$ the space of weakly measurable functions $u:[0,T]\to H$ with $\int_0^T\|u(t)\|_H^2 \d{t} < \infty$ and by $\partial_t u$ the derivative of $u$  with respect to time in the distributional sense.
The Sobolev space 
\begin{equation*}
W^1( 0,T; V^\varrho,H) := \Big\{ u\in L^2\big(0,T;V^\varrho\big) \,\Big| \,\partial_t u\in L^2\big(0,T; (V^\varrho)^\ast\big) \Big\},
\end{equation*}
will serve as solution space for equation \eqref{eq:evol}. For a more detailed introduction to the space $W^1\big(0,T; V^\varrho, H\big)$, which relies on the Bochner integral, we refer to Section 24.2 in \cite{Wloka-english}.  More information on Gelfand triplets can be found for instance in Section 17.1 in \cite{Wloka-english}.

Let $a: [0,T]\times V^\varrho \times V^\varrho \to \rr$ be a family $a=(a_t)_{t\in[0,T]}$ of bilinear forms that are measurable with respect to $t$ and let $\OA=(\OA_t)_{t\in[0,T]}$ with $\OA_t: V^\varrho \to (V^\varrho)^\ast$ be a family of operators. We say \textit{$a$ is associated with $\OA$}, if for every $t\in[0,T]$,
\begin{equation}\label{rel_OAa}
\OA_t(u)(v) = a_t(u,v)\qquad\text{for all } u,v\in  V^\varrho.
\end{equation}

We consider PIDEs with time-dependent operator $\mathcal{A}$  associated with a family of bilinear forms $a_t(\cdot,\cdot): V^\varrho \times V^\varrho \rightarrow \mathbb{R}$ for each $t\in[0,T]$ that is \textit{continuous and coercive uniformly in time}, a notion that we precise in the following two definitions.
\begin{definition}[Continuity and coercivity uniformly in time]
\label{def:acontinuous}
A bilinear form $a_\cdot(\cdot,\cdot): [0,T] \times V^\varrho \times V^\varrho \rightarrow \mathbb{R}$ is called 
\begin{enumerate}
\item[(i)] \emph{continuous} uniformly in time with respect to $V^\varrho$, if there exists $\alpha\in\mathbb{R}^+$ such that
	\begin{equation}
	\label{eq:defatcont}
		|a_t(u,v)| \leq \alpha \norm{u}_{V^\varrho}\norm{v}_{V^\varrho}
	\end{equation}
holds for all $u,v \in V^\varrho$ and for all $t\in [0,T]$. 
\item[(ii)] \emph{coercive} uniformly in time with respect to $V^\varrho$, if there exists $\beta\in\mathbb{R}^+$ independent of $t$ such that
	\begin{equation}
	\label{eq:defatcoerc}
		a_t(u,u) \geq \beta \norm{u}^2_{V^\varrho}
	\end{equation}
holds for all $u \in V^\varrho$ and for all $t\in [0,T]$. 
\end{enumerate}
We call such a constant $\alpha$ a \emph{continuity constant} and such a constant $\beta$ a \emph{coercivity constant} of the bilinear form $a$.
\end{definition}


Next we provide the weak formulation of evolution equation~\eqref{eq:evol}. We denote by $\langle \cdot, \cdot\rangle_{H}$ the scalar product  in $H$ and by $\langle \cdot | \cdot\rangle_{(V^\varrho)^\ast\!\times V^\varrho}$ the dual pairing mapping from $(V^\varrho)^\ast\!\times V^\varrho$ to $\rr$.

\begin{definition}
Let   $f\in L^2\big(0,T; (V^\varrho)^\ast\big)$ and $g\in H$. Then 
$u\in W^1( 0,T; V^\varrho,H)$ is a \emph{weak solution} of the evolution equation \eqref{eq:evol}, if for almost every $t\in(0,T)$, 
\begin{equation}\label{def-para}
 \langle \partial_t u(t), v\rangle_{H} + a_{T-t}( u(t), v) =\, \langle f(t) | v\rangle_{(V^\varrho)^\ast\!\times V^\varrho}\quad \text{for all }v\in V^\varrho
\end{equation}
and $u(t)$ converges to $g$ for $t\downarrow0$ in the norm of $H$.
\end{definition}

If the bilinear form $a$ is continuous and coercive with respect to $V^\varrho,H$ both uniformly in time then a classical existence and uniqueness result for parabolic equations yields the existence and uniqueness of a weak solution $u\in W^1( 0,T; V^\varrho,H)$ of equation \eqref{eq:evol}, see for instance Theorem~23.A in \cite{Zeidler}.

	\subsection{The Discrete Scheme}
	We introduce the discretisation along with notation that we use in the sequel.

\begin{definition}[Semi-discrete weak solution]
\label{def:WeakSolutionSemiDiscrete}
Let $V$, $H$ be separable Hilbert spaces and the dual $V^\ast$ of $V$ forming a Gelfand triplet,
	\begin{equation*}
		V \hookrightarrow H \cong H^\ast \hookrightarrow V^\ast.
	\end{equation*}
Let $V_h\subset V$ be finite dimensional and $f\in L^2(0,T; V^\ast)$. We call $u_h\in W^1(0,T; V_h, H)$ a \emph{semi-discrete weak solution} to problem~\eqref{eq:evol}, if for almost every $t\in(0,T)$
	\begin{equation}
	\begin{split}
		(\partial_t u_h(t),v_h)_H + a_t(u_h(t),v_h) =&\ \langle f(t), v_h\rangle_{V^\ast\times V},\qquad u_h(0) = g_h
	\end{split}
	\end{equation}
holds for all $v_h\in V_h$, where the time derivative is understood in the weak sense, $a$ is the bilinear form associated with operator $\mathcal{A}$ and $g_h\in H$.
\end{definition}


To obtain the fully discrete problem formulation we  discretize the time horizon $[0,T]$ equidistantly.
%
%
Choose $M\in\IN$ and define $\Dt = T/M$ and $t^m = \Dt\,m$ for all $m\in\{0,\dots,M\}$. We call $(T,M,\Dt)$ an \emph{equidistant time discretization}, the set $\{t^0,t^1,\dots,t^M\}$ the associated \emph{equidistant time grid} and we call $\Dt$ the \emph{time stepping size}. Throughout the following, the number of time steps will always be denoted by $M$ and $\Dt$ will always be defined as above.

\begin{definition}[Fully discrete weak solution]
\label{def:WeakSolutionFullyDiscrete}
Let $V$, $H$ separable Hilbert spaces and the dual $V^\ast$ of $V$ be given that form a Gelfand triplet,
	\begin{equation*}
		V \hookrightarrow H \cong H^\ast \hookrightarrow V^\ast
	\end{equation*}
and let $V_h\subset V$ a finite dimensional subspace of $V$. Let $f\in L^2(0,T; V^\ast)$. Further choose $M\in\IN$ and let $\{t^0,\dots,t^M\}$ an equidistant time grid with time stepping size $\Dt$. Finally choose $\theta\in[0,1]$. Then we call $(u^m_h)_{m\in\{0,\dots,M\}}$, $u_h^m\in V_h$, the \emph{fully discrete weak solution} to problem~\eqref{eq:evol}, if
	\begin{equation}
	\label{eq:ThetaSchemeFullydiscretizedPIDE}
	\begin{split}
		\left(\frac{u_h^{m+1}-u_h^m}{\Dt}, v_h\right)_H + a^{m+\theta}(u^{m+\theta}_h(t),v_h) =&\ \langle f^{m+\theta}, v_h\rangle_{V^\ast\times V},\qquad u^0_h = g_h
	\end{split}
	\end{equation}
holds for all $v_h\in V_h$ and for all $m\in\{0,\dots,M-1\}$ for some $g_h\in H$, 
and where 
	\begin{align}
		u_h^{m+\theta} =&\ \theta u_h^{m+1} + (1-\theta)u_h^m,\label{eq:defuhmtheta}\\
		f^{m+\theta} =&\ \theta f^{m+1} + (1-\theta)f^m,\label{eq:deffmtheta}
	\end{align}
with $f^m = f(t^m)$. With $a$ being the bilinear form associated with operator $\mathcal{A}$ we have set
	\begin{equation}
	\label{eq:defamtheta}
		a^{m+\theta}(\cdot,\cdot) = a_{\theta t^{m+1} + (1-\theta)t^m}(\cdot,\cdot).
	\end{equation}
An iterative relation between $u_h^m$ and $u_h^{m+1}$ for all $m\in\{0,\dots,M-1\}$ as arising from \eqref{eq:ThetaSchemeFullydiscretizedPIDE} is also called \emph{$\theta$ scheme}.
\end{definition}

In general, the two parameters $\theta\in[0,1]$ and $M\in\IN$  of the Theta Scheme~\eqref{eq:ThetaSchemeFullydiscretizedPIDE} can not be chosen independently from each other. The variable $M$ serves as a measure of the fineness of the discretization $(T,M,\Dt)$ in time. The value of $\theta$ controls the degree of implicitness of the scheme \eqref{eq:ThetaSchemeFullydiscretizedPIDE}. With $\theta=1$ the element $u_h^{m+1}$, appears twice in the scheme \eqref{eq:ThetaSchemeFullydiscretizedPIDE}, which is then called \emph{fully implicit}. With $\theta=0$ the element $u_h^{m+1}$ appears only once and thus the scheme is called \emph{fully explicit}. So called \emph{semi-explicit} schemes are those with $\theta\in(0,1)$ with the Crank-Nicolson scheme as the most prominent example $(\theta=\frac{1}{2})$.
As we will see later, in case that $\theta\leq \frac{1}{2}$, convergence and stability lemmas and theorems only grant their claims if $\Dt$ is small enough. Conditions of that sort are always called \emph{time stepping conditions}.
For the accuracy of an approximate solution $(u_h^m)_{m\in\{0,\dots,M\}}$ to problem~\eqref{eq:evol}, also the approximation quality of $g_h$, the approximate of the initial value $g$ plays a vital role.

%
%

	\subsection{Preliminaries and Assumptions}
\label{sec:AssumptionSection}

The conditions under which we derive stability and convergence are given below. In the way they are stated, they generalize the set of assumptions required by~\cite{PetersdorffSchwab2003} for their stability and convergence analysis.

For the error analysis and the derivation of convergence results we assume that the space $V$ that the solution space of the weak solution $u\in W^1(0,T;V,H)$ is built on provides a certain smoothness, denoted by a positive real value $s\in\IR^+$. More precisely, the space $V$ will always be a Sobolev space with index $s\in \IR^+$, see Definition in Equation~\eqref{eq:Hetastilde} as an example. From here on, we therefore add the superscript $s$ to $V$, thus wrting $V^\varrho$ instead of $V$, and its finite dimensional subspaces, by writing analogously $V^\varrho_h$ instead of $V_h$, to represent the smoothness of the respective space.
We discretize the space $V^\varrho$ to receive a finite dimensional subspace $V^\varrho_h\subset V^\varrho$ using for example piecewise polynomials of degree $p\geq 0$.


\begin{conditions}\label{cond}
\emph{For some indexes $0<\varrho<t$, a Gelfand triplet $(V^\varrho,H,(V^\varrho)^\ast)$ and an additional space $V^t\supset V^\varrho$, the bilinear form $a_\cdot(\cdot,\cdot): [0,T] \times V^{\varrho} \times V^{\varrho} \rightarrow \mathbb{R}$ and a finite dimensional subspace $V^\varrho_h\subset V^\varrho$, we introduce the following set of conditions:
\vspace{-0.5ex}
\begin{enumerate}[leftmargin=3em, label=(A\arabic{*}),widest=(A4)]\label{A1-A3}
\item
(Continuity and coercivity)  is continuous and coercive, both uniformly in time with respect to $V^{\varrho}$.
\item\label{ass:GeneralApproximationProperty}
(Approximation property of the Galerkin space)
There exists a family of bounded linear projectors
	$P_h:V^\varrho\rightarrow V^\varrho_h$, a constant $C_\Upsilon>0$ 
	and a function $\Upsilon$ with $\Upsilon(h,\varrho,t,u)\rightarrow 0$ for $h\rightarrow 0$ 
	such that for all $u\in V^t$,
	\begin{equation}
	\label{eq:GeneralApproximationProperty}
		\norm{u-P_h u}_{V^\varrho} \leq C_\Upsilon\,\Upsilon(h,\varrho, t, u) .
	\end{equation}
\item\label{ass:quasioptinitial}
(Quasi-optimality of the initial condition)
 There is a constant $C_I>0$ independent of $h>0$ such that
	\begin{equation}
		\norm{g - g_h}_H\leq C_I\inf\limits_{v_h \in V^\varrho_h} \norm{g - v_h}_H.
	\end{equation}
\end{enumerate}
}
 \end{conditions}
Condition (A1) is equivalent to the continuity and ellipticity of the bilinear form $a$ with respect to $V^\varrho$. We formulate Condition (A2)in terms of a function $\Upsilon(h,\varrho, t, u)$ since often this function often is of the form $\Upsilon(h,\varrho, t, u)= h^{t-\varrho}\|u\|_{V^t}$ and more general expressions also appear.
Conditions (A2),(A3) are basic approximation conditions on the Galerkin spaces. They are not only satisfied for $V^\varrho_h$ being the linear space spanned by the hat functions with mesh fineness $h$, but also for instance for wavelet approximation spaces. We will discuss all conditions in detail in Section~\ref{sec-discuss-assumptions}.

In order to perform the stability and convergence estimates, an appropriate norm for the dual space of $V^\varrho$ is needed. In the case of time-homogeneity the natural candidate for such a norm is $\norm{f}_{\ast}  :=\ \sup\limits_{\underset{v_h\neq 0}{v_h \in V^\varrho_h}} \frac{(f,v_h)}{\norm{v_h}_{a}}$ for  all $f\in {V^\varrho}^\ast$. This norm is chosen in \cite{PetersdorffSchwab2003} and turns out to be very appealing since repeatedly terms
In our case, however, the energy norm $\norm{v_h}_{a}$ needs to be replaced by $\norm{v_h}_{a_t}$, the time-dependent family of norms that turns out to be not suitable. This forces us to deviate from the concept used in \cite{PetersdorffSchwab2003}, and we need to define another norm for the dual space of the discrete solution space. To do so, the following observation proves useful.

\begin{remark}[Energy norm]
A bilinear form $a$ that is continuous and coercive both uniformly in time in the sense of Definition \ref{def:acontinuous} induces a norm $\norm{\cdot}_{a_t} = \sqrt{a_t(\cdot,\cdot)}$ on $V^\varrho$ for each $t\in[0,T]$ that is equivalent to the norm of $V^\varrho$, since 
	\begin{equation*}
		\sqrt{\beta}\norm{u}_{V^\varrho} \leq \norm{u}_{a_t} \leq \sqrt{\alpha}\norm{u}_{V^\varrho},
	\end{equation*}
for all $u\in V^\varrho$ wherein $\alpha$ and $\beta$ are the time independent constants from Definition~\ref{def:acontinuous}. The norm $\norm{\cdot}_{a_t}$ is called \emph{enery norm} of $a_t(\cdot,\cdot)$. In contrast to the time-homogeneous case the energy norm is time-dependent leading to a whole family of norms. 
\end{remark}

Attached to the spaces $H$, $V^\varrho$ and $V^\varrho_h$ we introduce the norms
	\begin{equation}
	\begin{alignedat}{2}
		\norm{u} :=&\ \norm{u}_{H},&&\qquad \text{for $u\in H$},\\
		\norm{f}_{{V^\varrho_h}^\ast}  :=&\ \sup\limits_{\underset{v_h\neq 0}{v_h \in V^\varrho_h}} \frac{(f,v_h)}{\norm{v_h}_{V^\varrho}},&&\qquad \text{for $f\in {V^\varrho}^\ast$}.\label{eq:Vsstarnorm}
	\end{alignedat}
	\end{equation}
Moreover, we will also need the constant $\Lambda$, defined by
	\begin{equation}
	\label{eq:defLambda}
		\Lambda	:= \sup\limits_{\underset{v_h\neq 0}{v_h \in V^\varrho_h}} \frac{\norm{v_h}_H^2}{\norm{v_h}_{{V^\varrho_h}^*}^2}.
	\end{equation}

\begin{remark}[On $\Lambda$]
\label{rem:LambdaFinite}
Given $h>0$ and the respective finite dimensional space $V^\varrho_h\subset V^\varrho$, the constant $\Lambda$ defined in \eqref{eq:defLambda} is finite due to the fact that all norms involved are norms restricted to finite dimensional spaces and thus all norms are equivalent. From this, $\Lambda$ being finite follows immediately. Notice, however, that $\Lambda$ depends on $h$ and thus on the dimension of the spaces involved,
	\begin{equation*}
		\Lambda = \Lambda(h).
	\end{equation*}
Yet, $\Lambda$ is not necessarily bounded in $h$ and its limit for $h\rightarrow 0$ does not need to be finite.
\end{remark}

As explained above, we are forced to deviate from the proof of \cite{PetersdorffSchwab2003} due to the time dependence of the operator. Interestingly, our new proof does not rely on one of the essential assumptions in \cite{PetersdorffSchwab2003}, namely we do not require the inverse property stated below.

\begin{enumerate}[leftmargin=3em, label=(A\arabic{*}),widest=(A4)]\label{A4}
\setcounter{enumi}{3}
\item\label{ass:InverseProperty}
(Inverse property)
There is a constant $C_\text{IP}>0$ independent of $h>0$ such that for all $u_h\in V^\varrho_h$
	\begin{equation}
		\norm{u_h}_{V^\varrho} \leq C_\text{IP}\,h^{-\varrho}\norm{u_h}_H.
	\end{equation}
	\end{enumerate}



We are now in a position to state the main results.

\section{Stability Estimates}\label{sec-stab}
We derive a stability result regarding a solution to $\theta$-Scheme~\eqref{eq:ThetaSchemeFullydiscretizedPIDE} under the assumption of continuity and coercivity of the associated time-dependent bilinear form.

\begin{proposition}[Stability estimate for $\theta$-Scheme]
\label{prop:StabThetaCoercive}
Let $a_\cdot(\cdot,\cdot)$ be a time-dependent bilinear form that is both continuous and coercive uniformly in time with respect to $V^\varrho$ and $H$ with continuity constant $\alpha$ and coercivity constant $\beta$. 
Let $\theta\in [0,1]$ and let $(u_h^m)_{m\in\{0,\dots,M\}}$ be a solution of the associated $\theta$-Scheme~\eqref{eq:ThetaSchemeFullydiscretizedPIDE} on an equidistant time grid $(T, M, \Dt)$. For $\theta\in \left[\frac{1}{2},1\right]$ let
	\begin{align*}
		0 < C_1& < 2,\qquad
		 C_2 \geq \frac{1}{\beta(2-C_1)}.		
	\end{align*}
For $\theta \in \big[0,\frac{1}{2}\big)$ assume the time stepping size $\Dt$ to satisfy the time stepping condition
	\begin{equation}
	\label{eq:stabcoercivedt}
		0<\Dt < \frac{2\beta}{(1-2\theta)\Lambda\alpha^2}.
	\end{equation}
Define the constant
	\begin{equation}
	\label{eq:stabcoercivemu}	
		\mu := \left(1-2\theta\right)\Lambda\Dt > 0
	\end{equation}
and let 
	\begin{align}
		C_1& \in \left(0,2-\frac{\mu\alpha^2}{\beta}\right),\qquad
		C_2 \geq \max\left\{\mu, \frac{(1+\mu\alpha)^2}{(2-C_1)\beta - \mu\alpha^2} + \mu\right\}.\label{eq:stabcoerciveC2}
	\end{align}
Then the following stability estimate holds,
	\begin{equation*}
		\norm{u_h^M}_H^2 + \Dt\, C_1  \sum_{m=0}^{M-1} \norm{u_h^{m+\theta}}_{a^{m+\theta}}^2 \leq \norm{u_h^0}_H^2 + \Dt\, C_2 \sum_{m=0}^{M-1}\norm{f^{m+\theta}}_{{V^\varrho_h}^\ast}^2.
	\end{equation*}
\end{proposition}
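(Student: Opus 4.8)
The plan is to test the $\theta$-scheme~\eqref{eq:ThetaSchemeFullydiscretizedPIDE} against the natural choice $v_h = u_h^{m+\theta}$ and sum the resulting inequality telescopically over $m$. First I would write the key algebraic identity relating the increment to the $H$-norms: using $u_h^{m+\theta} = \theta u_h^{m+1}+(1-\theta)u_h^m$, one computes
\begin{equation*}
	\left(u_h^{m+1}-u_h^m,\, u_h^{m+\theta}\right)_H = \tfrac{1}{2}\left(\norm{u_h^{m+1}}_H^2 - \norm{u_h^m}_H^2\right) + \left(\theta - \tfrac{1}{2}\right)\norm{u_h^{m+1}-u_h^m}_H^2 .
\end{equation*}
Substituting this into the scheme (tested with $v_h=u_h^{m+\theta}$ and multiplied by $\Dt$) yields, after using coercivity $a^{m+\theta}(u_h^{m+\theta},u_h^{m+\theta}) = \norm{u_h^{m+\theta}}_{a^{m+\theta}}^2$,
\begin{equation*}
	\tfrac{1}{2}\left(\norm{u_h^{m+1}}_H^2 - \norm{u_h^m}_H^2\right) + \left(\theta-\tfrac{1}{2}\right)\norm{u_h^{m+1}-u_h^m}_H^2 + \Dt\,\norm{u_h^{m+\theta}}_{a^{m+\theta}}^2 = \Dt\,\langle f^{m+\theta}, u_h^{m+\theta}\rangle .
\end{equation*}

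Next I would estimate the right-hand dual pairing. By the definition~\eqref{eq:Vsstarnorm} of $\norm{\cdot}_{{V^\varrho_h}^\ast}$ and Young's inequality, $\Dt\,\langle f^{m+\theta}, u_h^{m+\theta}\rangle \le \Dt\,\norm{f^{m+\theta}}_{{V^\varrho_h}^\ast}\norm{u_h^{m+\theta}}_{V^\varrho}$, and the energy-norm equivalence from the Energy norm remark lets me bound $\norm{u_h^{m+\theta}}_{V^\varrho} \le \tfrac{1}{\sqrt\beta}\norm{u_h^{m+\theta}}_{a^{m+\theta}}$, so that a weighted Young split absorbs a fraction of the coercive term on the left and leaves a multiple of $\Dt\,\norm{f^{m+\theta}}_{{V^\varrho_h}^\ast}^2$; this is how the free constant $C_1$ (the retained share of coercivity) and $C_2$ (the penalty on $f$) enter. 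For $\theta\in[\tfrac12,1]$ the term $(\theta-\tfrac12)\norm{u_h^{m+1}-u_h^m}_H^2$ is nonnegative and simply discarded, which is why only the mild constraints $0<C_1<2$, $C_2\ge \tfrac{1}{\beta(2-C_1)}$ appear there.

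The main obstacle is the explicit regime $\theta\in[0,\tfrac12)$, where the difference term carries the \emph{wrong sign} and must be controlled rather than dropped. Here I would bound $(\tfrac12-\theta)\norm{u_h^{m+1}-u_h^m}_H^2$ by passing to the dual norm via the constant $\Lambda$ of~\eqref{eq:defLambda}: from the scheme itself, the increment $u_h^{m+1}-u_h^m$ is controlled in $\norm{\cdot}_{{V^\varrho_h}^\ast}$ by $\Dt$ times $(\norm{f^{m+\theta}}_{{V^\varrho_h}^\ast} + \alpha\norm{u_h^{m+\theta}}_{V^\varrho})$ using continuity~\eqref{eq:defatcont}, and then $\norm{\cdot}_H^2 \le \Lambda\,\norm{\cdot}_{{V^\varrho_h}^\ast}^2$. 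This produces the factor $\mu=(1-2\theta)\Lambda\Dt$ and explains both the time-stepping condition~\eqref{eq:stabcoercivedt}, which guarantees that the coercive contribution $\mu\alpha^2/\beta$ stays strictly below $2$ so that a positive share of coercivity survives, and the precise form of the admissible $(C_1,C_2)$ in~\eqref{eq:stabcoerciveC2}. Once the per-step inequality $\tfrac12\norm{u_h^{m+1}}_H^2 + \Dt\,\tfrac{C_1}{2}\norm{u_h^{m+\theta}}_{a^{m+\theta}}^2 \le \tfrac12\norm{u_h^m}_H^2 + \Dt\,\tfrac{C_2}{2}\norm{f^{m+\theta}}_{{V^\varrho_h}^\ast}^2$ is secured in both regimes, summing over $m=0,\dots,M-1$ telescopes the $H$-norms and gives the stated estimate. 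The delicate bookkeeping is matching the Young-inequality weights exactly to the stated bounds on $C_1$ and $C_2$ so that every absorbed term is nonnegative.
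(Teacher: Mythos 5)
Your proposal is correct and follows essentially the same route as the paper's proof: testing with $v_h=u_h^{m+\theta}$, using the polarization identity for the increment, dropping the nonnegative difference term for $\theta\in[\tfrac12,1]$, and for $\theta\in[0,\tfrac12)$ controlling $\norm{u_h^{m+1}-u_h^m}_H^2$ via $\Lambda$ and the scheme itself (with continuity), which produces $\mu$ and the time-stepping condition. The paper packages the per-step inequality as the nonnegativity of a quantity $X^m$ and completes squares where you invoke weighted Young, but these are the same computation.
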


The following remark argues that the intervals for the constants $C_1, C_2$ introduced  in the proposition are indeed well-defined.

\begin{remark}[On the constants of Proposition~\ref{prop:StabThetaCoercive}]
For $\theta\in[0,\frac{1}{2})$ the constant $\mu$ is well defined and indeed larger than zero and the set of possible values for $C_1$, $C_2$ is non-empty. With $\Dt$ chosen according to \eqref{eq:stabcoercivedt} we have
	\begin{equation*}
		\frac{\mu\alpha^2}{\beta} = \frac{(1-2\theta)\Lambda\Dt\alpha^2}{\beta} <\frac{(1-2\theta)\Lambda\frac{2\beta}{(1-2\theta)\Lambda\alpha^2}\alpha^2}{\beta} = 2
	\end{equation*}
which admits a non-empty interval from which $C_1$ may be chosen. Since $\mu$ is finite and bounded, we have $C_2<\infty$ if $(2-C_1)\beta > \mu\alpha^2$.
Observe that the latter is true by considering the range from which $C_1$ may be chosen.
\end{remark}

\begin{remark}[On the time stepping condition and the inverse property]
\label{rem:LambdaInvProp}
Consider the time stepping condition~\eqref{eq:stabcoercivedt} for $\theta \in \big[0,\frac{1}{2}\big)$ in Proposition~\ref{prop:StabThetaCoercive}. Under the inverse property Condition~\ref{ass:InverseProperty} we have for all $w_h\in V^\varrho_h$,
	\begin{equation}
	\begin{split}
		\norm{w_h}_{{V^\varrho_h}^\ast} =&\ \sup\limits_{v_h\in V^\varrho_h}\frac{(w_h,v_h)}{\norm{v_h}_{V^\varrho}} 
		\geq \frac{1}{C_\text{IP}}h^s\sup\limits_{v_h\in V^\varrho_h}\frac{(w_h,v_h)}{\norm{v_h}_H} \geq \frac{1}{C_\text{IP}}h^{\varrho}\norm{w_h}_H
	\end{split}
	\end{equation}
and hence
	\begin{equation}
	\begin{split}
		\Lambda =&\ \sup\limits_{v_h\in V^\varrho_h} \frac{\norm{v_h}_H^2}{\norm{v_h}_{{V^\varrho_h}^\ast}^2} \leq C^2_\text{IP}\,h^{-2\varrho},
	\end{split}
	\end{equation}
with $\Lambda=\Lambda(h)$ defined in \eqref{eq:defLambda}. Consequently, under Condition~\ref{ass:InverseProperty}, for $\theta \in \big[0,\frac{1}{2}\big)$ the time stepping condition on $\Dt$ as required by \eqref{eq:stabcoercivedt} in Proposition~\ref{prop:StabThetaCoercive} is satisfied if
	\begin{equation}
		0 < \Dt < \frac{2\beta}{(1-2\theta)C^2_\text{IP}\,\alpha^2}\,h^{\orderA}= C_\theta\,h^{\orderA}
	\end{equation}
with $C_\theta = 2\beta/[(1-2\theta)C^2_\text{IP}\,\alpha^2]$.
\end{remark}
Finally, we consider a stability result with respect to the norm $\norm{\cdot}_{V^\varrho}$. 

\begin{corollary}[Stability of the $\theta$-scheme]
\label{cor:StabThetaCoerciveVnorm}
Under the assumptions of Proposition~\ref{prop:StabThetaCoercive} the stability estimate
	\begin{equation}
	\label{eq:StabThetaCoerciveVnorm}
		\norm{u_h^M}_H^2 + \Dt\, C_1\beta  \sum_{m=0}^{M-1} \norm{u_h^{m+\theta}}_{V^\varrho}^2 \leq \norm{u_h^0}_H^2 + \Dt\, C_2 \sum_{m=0}^{M-1}\norm{f^{m+\theta}}_{{V_h^s}^\ast}^2.
	\end{equation}
\end{corollary}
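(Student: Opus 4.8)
The plan is to obtain this estimate directly from Proposition~\ref{prop:StabThetaCoercive} by trading the energy-norm contribution on the left-hand side for a $V^\varrho$-norm contribution via coercivity. Since the hypotheses of the corollary are precisely those of the proposition, I may take its conclusion
\[
	\norm{u_h^M}_H^2 + \Dt\, C_1 \sum_{m=0}^{M-1} \norm{u_h^{m+\theta}}_{a^{m+\theta}}^2 \leq \norm{u_h^0}_H^2 + \Dt\, C_2 \sum_{m=0}^{M-1}\norm{f^{m+\theta}}_{{V^\varrho_h}^\ast}^2
\]
as the starting point, with $C_1>0$ guaranteed in both the $\theta\in[\tfrac12,1]$ and the $\theta\in[0,\tfrac12)$ regimes by the admissible ranges specified there.

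First I would recall from the Energy norm remark that continuity and coercivity uniformly in time yield the two-sided bound $\sqrt{\beta}\,\norm{u}_{V^\varrho} \leq \norm{u}_{a_t} \leq \sqrt{\alpha}\,\norm{u}_{V^\varrho}$ for every $t\in[0,T]$ and every $u\in V^\varrho$. Squaring the lower inequality and evaluating it at the time point $\theta t^{m+1} + (1-\theta)t^m$ associated with $a^{m+\theta}$ in \eqref{eq:defamtheta} gives
\[
	\beta\,\norm{u_h^{m+\theta}}_{V^\varrho}^2 \leq \norm{u_h^{m+\theta}}_{a^{m+\theta}}^2
\]
for each $m\in\{0,\dots,M-1\}$.

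Summing these inequalities over $m$ and multiplying by the nonnegative factor $\Dt\, C_1$ shows that $\Dt\, C_1\beta \sum_{m=0}^{M-1} \norm{u_h^{m+\theta}}_{V^\varrho}^2$ is bounded above by the energy-norm sum appearing on the left of the proposition's estimate; adding $\norm{u_h^M}_H^2$ to both sides and chaining with that estimate then produces \eqref{eq:StabThetaCoerciveVnorm}. I expect no genuine obstacle here, as the whole argument is a one-step application of coercivity; the only point deserving a word of care is the sign of $C_1$, which must be positive for the inequality direction to be preserved under multiplication, and this is precisely what the constant choices in Proposition~\ref{prop:StabThetaCoercive} ensure.
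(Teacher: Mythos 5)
Your proposal is correct and follows exactly the route the paper takes: the paper's own proof simply states that the estimate is a direct consequence of Proposition~\ref{prop:StabThetaCoercive} together with the uniform coercivity with constant $\beta$, which is precisely the one-step application of $\beta\,\norm{u_h^{m+\theta}}_{V^\varrho}^2 \leq \norm{u_h^{m+\theta}}_{a^{m+\theta}}^2$ that you spell out. Your added care about the positivity of $C_1$ is sound but unremarkable, since the proposition guarantees it in both $\theta$-regimes.
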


\begin{proof}
The claim is a direct consequence of Proposition~\ref{prop:StabThetaCoercive} and the uniform coercivity of the bilinear form with coercivity constant $\beta$.
\end{proof}

 Corollary~\ref{cor:StabThetaCoerciveVnorm} shows
that the solution of the discrete scheme is bounded by its initial data in a discrete $L^2(0,T,V^\varrho)$ or $L^2(0,T,{V^\varrho_h}^\ast)$ norm, respectively.

\section{Convergence Analysis}
\label{sec:ConvCoercive}

In this subsection we derive a converge result for the $\theta$-Scheme~\eqref{eq:ThetaSchemeFullydiscretizedPIDE}. 
For that matter we consider the residuals between each member of $(u^m)_{m\in\{0,\dots,M\}}$, the weak solution of \eqref{eq:evol} evaluated at time points $t^m$, $m=0,\dots,M$, and the respective members of the sequence $(u^m_h)_{m\in\{0,\dots,M\}}$, the solution of $\theta$-Scheme~\eqref{eq:ThetaSchemeFullydiscretizedPIDE}.

In order to ultimately prove convergence, we will show that (parts of) these residuals satisfy an auxiliary $\theta$-scheme. The crucial observation is that this scheme is of a similar structure as $\theta$-Scheme~\eqref{eq:ThetaSchemeFullydiscretizedPIDE}. This enables us to apply Proposition \ref{prop:StabThetaCoercive} to these very residuals will yield an upper bound for the sum of their norms from which convergence can be deduced. With this structure of the proof we closely follow \cite{PetersdorffSchwab2003}. We provide all proofs that follow along the same lines as in \cite{PetersdorffSchwab2003} in detail in the appendix. Within this section, we only present very short proofs and the proof of the final convergence result, which requires deviating from the concept in \cite{PetersdorffSchwab2003}.

We define for all $m\in\{0,\dots,M\}$ the difference $e_h^m$ between the weak solution evaluated at time point $t^m$ and its finite dimensional approximation affiliated with time point $t^m$ as
	\begin{equation}
	\label{eq:defehm}
	\begin{split}
		e_h^m =&\ u^m - u_h^m\\
			=&\ (u^m - P_h u^m) + (P_h u^m - u_h^m)\\
			=&\ \eta^m + \xi_h^m,
	\end{split}
	\end{equation}
with
	\begin{align}
		\eta^m =&\ u^m - P_h u^m,\qquad \forall m\in\{0,\dots, M\}, \label{eq:defetam}\\
		\xi_h^m =&\ P_h u^m - u_h^m,\qquad \forall m\in\{0,\dots, M\}, \label{eq:defxim}
	\end{align}
with a projector $P_h$ adhering to Assumption~\ref{ass:GeneralApproximationProperty}. The quantity $e_h^m$ thus consists of two parts. The first part, $\eta^m$, carries the discretization error, the second part, $\xi_h^m$, denotes the inaccuracy of the approximate solution with respect to the projection of the weak solution into the finite dimensional subspace.

Our final result shows that the convergence rate depending on the the projection error to the Galerkin space, i.e.\ the function $\Upsilon$ of Assumption~\ref{ass:GeneralApproximationProperty}, and its behaviour when $h$ tends to zero. This behaviour of the projection error in turn originates from the smoothness that the weak solution $u$ admits. The more regularity it exhibits, the faster the achieved rate of convergence will be. 
In Section~\ref{sec-approxprop} we will discuss the behaviour of the projection error for examples of solution spaces and Galerkin spaces, which are appropriate for problems in finance.

To derive the convergence result we focus on the term $\xi_h^m$ in~\eqref{eq:defehm}, first. Being the part of the residual $e_h^m$ that denotes the deviation of the solution of the $\theta$ scheme from the projection of the weak solution, it is of central interest for the whole analysis.

\begin{lemma}[$\theta$-scheme for the $\xi_h^m$]
\label{lem:residual}
Let $u\in W^1(0,T;V^\varrho,H)$ be the weak solution to problem~\eqref{eq:evol} with continuous and coercive bilinear form $a$, i.e.\ we assume Condition~(A1). Furthermore, be $(u_h^m)_{m\in\{0,\dots,M\}}$ the solution to $\theta$-Scheme~\eqref{eq:ThetaSchemeFullydiscretizedPIDE}, and let $\xi_h^m$, $m\in\{1,\dots,M\}$, be defined by \eqref{eq:defxim}. If additionally $u\in C^1([0,T];H)$ and the bilinear form is continuous in $t$ then we have
	\begin{equation}
	\begin{split}
		\left(\frac{\xi_h^{m+1} - \xi_h^m}{\Dt}, v_h\right) + a^{m+\theta}(\theta \xi_h^{m+1} + (1-\theta) \xi_h^m,v_h) =&\ (r^m,v_h),\quad
		\xi_h^0 = P_h g - u_h^0
	\end{split}
	\end{equation}
for all $m=1,\dots,M-1$ and for all $v_h \in V^\varrho_h$, where the weak residuals $r^m: V^\varrho_h\rightarrow\mathbb{R}$ have the form	
		\begin{equation}
		\label{Theta:xihm}
			r^m = r_1^m + r_2^m + r_3^m
		\end{equation}
for all $m\in\{0,\dots,M-1\}$, with
	\begin{align*}
		(r_1^m, v_h) =&\ \left(\frac{u^{m+1}-u^m}{\Dt} - \dot{u}^{m+\theta}, v_h\right),\\
		(r_2^m, v_h) =&\ \left(\frac{P_h u^{m+1} - P_h u^m}{\Dt} - \frac{u^{m+1} - u^m}{\Dt}, v_h\right),\\
		(r_3^m, v_h) =&\ a^{m+\theta}\left(P_h u^{m+\theta} - u^{m+\theta}, v_h\right).
	\end{align*}
\end{lemma}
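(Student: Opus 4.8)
The plan is to derive the asserted $\theta$-scheme for $(\xi_h^m)$ by a purely algebraic manipulation that plays the fully discrete scheme~\eqref{eq:ThetaSchemeFullydiscretizedPIDE} off against the continuous weak equation~\eqref{def-para}, using the splitting~\eqref{eq:defehm}. First I would start from the left-hand side of the claimed identity, insert $\xi_h^m = P_h u^m - u_h^m$ from~\eqref{eq:defxim}, and use bilinearity together with $\theta\xi_h^{m+1}+(1-\theta)\xi_h^m = P_h u^{m+\theta}-u_h^{m+\theta}$ to split
\[
\Big(\tfrac{\xi_h^{m+1}-\xi_h^m}{\Dt},v_h\Big) + a^{m+\theta}\big(\theta\xi_h^{m+1}+(1-\theta)\xi_h^m,v_h\big)
\]
into a $P_h u$-contribution and a $u_h$-contribution. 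By the discrete scheme~\eqref{eq:ThetaSchemeFullydiscretizedPIDE} the $u_h$-contribution equals $\langle f^{m+\theta},v_h\rangle$, so it can be substituted out; this removes every occurrence of the discrete solution and leaves an expression built solely from $P_h u$, the bilinear form $a^{m+\theta}$, and the data $f^{m+\theta}$.

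The second step eliminates the data in favour of the exact solution. Here I would invoke the weak equation~\eqref{def-para} at the intermediate time level $m+\theta$ to produce the consistency relation $(\dot u^{m+\theta},v_h)+a^{m+\theta}(u^{m+\theta},v_h)=\langle f^{m+\theta},v_h\rangle$ and substitute it for $\langle f^{m+\theta},v_h\rangle$. Inserting the telescoping device $\pm\big(\tfrac{u^{m+1}-u^m}{\Dt},v_h\big)$ then makes the time-stepping defect $(r_1^m,v_h)$ and the projected-increment defect $(r_2^m,v_h)$ emerge and combine to $\big(\tfrac{P_h u^{m+1}-P_h u^m}{\Dt}-\dot u^{m+\theta},v_h\big)$, while the surviving bilinear-form terms collapse to $a^{m+\theta}(P_h u^{m+\theta}-u^{m+\theta},v_h)=(r_3^m,v_h)$. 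This produces $r^m=r_1^m+r_2^m+r_3^m$; the stated initial value $\xi_h^0 = P_h g - u_h^0$ is immediate from $u^0=g$, $u_h^0=g_h$ and~\eqref{eq:defxim}.

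The step I expect to be the genuine obstacle is establishing the intermediate-level consistency relation that the second step relies on, because this is where the time dependence of $a$ forces a departure from~\cite{PetersdorffSchwab2003}. The naive $\theta$-convex combination of~\eqref{def-para} at $t^m$ and $t^{m+1}$ delivers $\theta\,a_{t^{m+1}}(u^{m+1},v_h)+(1-\theta)\,a_{t^m}(u^m,v_h)$, which does \emph{not} coincide with $a^{m+\theta}(u^{m+\theta},v_h)$, since the bilinear form is sampled at three distinct instants. Reconciling the two is the heart of the matter: one has to read~\eqref{def-para} consistently at the single intermediate time $t^{m+\theta}=\theta t^{m+1}+(1-\theta)t^m$, at which $a^{m+\theta}=a_{t^{m+\theta}}$ holds by~\eqref{eq:defamtheta}, and to identify $\dot u^{m+\theta}$ with the matching intermediate-level time derivative. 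The hypotheses $u\in C^1([0,T];H)$ and continuity of $t\mapsto a_t$ are exactly what legitimise this intermediate reading and keep $\dot u^{m+\theta}$ well defined, so that the defect generated by the time-inhomogeneity is absorbed through the definition of the temporal residual $r_1^m$ rather than spawning a further term. In the time-homogeneous setting of~\cite{PetersdorffSchwab2003} one has $a_{t^m}=a_{t^{m+1}}=a^{m+\theta}$, the reconciliation is vacuous, and this difficulty never surfaces; isolating it cleanly is the whole point of the lemma.
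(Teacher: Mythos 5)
Your proposal is correct and follows essentially the same route as the paper's proof in Appendix~\ref{sec-theta_residual}: split the left-hand side into a $P_h u$-part and a $u_h$-part via bilinearity, eliminate the $u_h$-part through the discrete scheme~\eqref{eq:ThetaSchemeFullydiscretizedPIDE}, eliminate $(f^{m+\theta},v_h)$ through the intermediate-level consistency identity~\eqref{eq:L51help}, and insert $\pm\big(\tfrac{u^{m+1}-u^m}{\Dt},v_h\big)$ to produce $r_1^m$, $r_2^m$, $r_3^m$, with the initial value being immediate. The identity you single out as the genuine obstacle is exactly the paper's~\eqref{eq:L51help}, asserted there under the same $C^1([0,T];H)$ and $t$-continuity hypotheses via the fundamental theorem of variational calculus; note only that the paper's $u^{m+\theta}$, $\dot u^{m+\theta}$ and $f^{m+\theta}$ are the $\theta$-convex combinations of the time-level values rather than point evaluations at $t^{m+\theta}$, so your ``read~\eqref{def-para} at the single intermediate time'' gloss should be adjusted to that convention --- the paper is no more detailed at this juncture than you are.
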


The proof of the lemma is provided in Appendix \ref{sec-theta_residual}.



For the solution $(\xi_h^m)_{m\in\{0,\dots,M\}}$ of Scheme~\eqref{Theta:xihm}, the following stability estimate holds.

\begin{corollary}[Stability estimate for $\xi_h^m$]
\label{cor:stabxicoerc}
Let $(\xi_h^m)_{m\in\{0,\dots,M\}}$ be the solution of $\theta$-Scheme~\eqref{Theta:xihm} with $\theta\in[0,1]$ and let the assumptions of Proposition~\ref{prop:StabThetaCoercive} be satisfied.
Then there exist positive constants $C_1$, $C_2$ such that the following stability estimate holds,
	\begin{equation}
	\label{eq:stabxicoerc}
		\norm{\xi_h^M}_H^2 + \Dt\, C_1  \sum_{m=0}^{M-1} \norm{\xi_h^{m+\theta}}_{a^{m+\theta}}^2 \leq \norm{\xi_h^0}_H^2 + \Dt\, C_2 \sum_{m=0}^{M-1}\norm{r^m}_{{V^\varrho_h}^\ast}^2.
	\end{equation}
\end{corollary}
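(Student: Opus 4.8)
The plan is to recognise that the stability estimate for $(\xi_h^m)$ is not a fresh result to be proved from scratch, but a direct application of Proposition~\ref{prop:StabThetaCoercive}. The decisive observation, already isolated in Lemma~\ref{lem:residual}, is that the sequence $(\xi_h^m)_{m\in\{0,\dots,M\}}$ solves a $\theta$-scheme of \emph{exactly the same structure} as $\theta$-Scheme~\eqref{eq:ThetaSchemeFullydiscretizedPIDE}. Comparing \eqref{eq:ThetaSchemeFullydiscretizedPIDE} with \eqref{Theta:xihm}, one reads off the dictionary $u_h^m \leftrightarrow \xi_h^m$, the convex combination $u_h^{m+\theta} \leftrightarrow \theta\xi_h^{m+1}+(1-\theta)\xi_h^m =: \xi_h^{m+\theta}$, the initial datum $g_h \leftrightarrow \xi_h^0 = P_h g - u_h^0$, and the right-hand side functional $f^{m+\theta} \leftrightarrow r^m$. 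The scheme for $\xi_h^m$ is thus an instance of the abstract $\theta$-scheme to which Proposition~\ref{prop:StabThetaCoercive} applies, driven by the very same bilinear form $a$, the same finite-dimensional space $V^\varrho_h$, and the same time grid $(T,M,\Dt)$.

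First I would verify that the hypotheses of Proposition~\ref{prop:StabThetaCoercive} transfer verbatim. Since the bilinear form is unchanged, its continuity constant $\alpha$ and coercivity constant $\beta$ are the ones already fixed in the proposition; likewise $\Lambda=\Lambda(h)$ from \eqref{eq:defLambda} depends only on $V^\varrho_h$ and $H$, hence is identical. Consequently the time-stepping condition \eqref{eq:stabcoercivedt} for $\theta\in[0,\tfrac12)$ and the admissible ranges for $C_1$ and $C_2$ are literally the same as in the proposition, and the constants produced there serve as the positive constants $C_1,C_2$ asserted in the corollary. The only point needing a word is that $r^m$ be admissible as a right-hand side, i.e.\ that $v_h\mapsto (r^m,v_h)$ defines an element of ${V^\varrho_h}^\ast$; this is immediate, since each of $r_1^m,r_2^m,r_3^m$ is a bounded linear functional on the finite-dimensional space $V^\varrho_h$, so their sum is as well and $\norm{r^m}_{{V^\varrho_h}^\ast}$ is finite.

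With the correspondence in place, I would simply invoke Proposition~\ref{prop:StabThetaCoercive} with $(\xi_h^m)$ in the role of $(u_h^m)$ and $r^m$ in the role of $f^{m+\theta}$. Its conclusion then reads
\begin{equation*}
\norm{\xi_h^M}_H^2 + \Dt\, C_1 \sum_{m=0}^{M-1}\norm{\xi_h^{m+\theta}}_{a^{m+\theta}}^2 \leq \norm{\xi_h^0}_H^2 + \Dt\, C_2 \sum_{m=0}^{M-1}\norm{r^m}_{{V^\varrho_h}^\ast}^2,
\end{equation*}
which is precisely \eqref{eq:stabxicoerc}.

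I expect the main (and essentially the only) obstacle to be one of care rather than of substance: one must confirm that the proof of Proposition~\ref{prop:StabThetaCoercive} nowhere exploits the specific convex-combination form $f^{m+\theta}=\theta f^{m+1}+(1-\theta)f^m$ of the data, but treats the right-hand side purely as an abstract functional whose size enters only through $\norm{\cdot}_{{V^\varrho_h}^\ast}$. Since the stability argument is indeed agnostic to how the functional on the right is assembled, the substitution of $r^m$ for $f^{m+\theta}$ is legitimate and the application is direct. A secondary bookkeeping point is to ensure that the recursion of Lemma~\ref{lem:residual}, together with the stated initial condition $\xi_h^0 = P_h g - u_h^0$, realises the abstract scheme over the full index range $m\in\{0,\dots,M-1\}$ appearing in the summation.
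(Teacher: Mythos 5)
Your proposal is correct and follows exactly the paper's own argument: the paper likewise observes that the $\xi_h^m$ play the role of the $u_h^m$ and the $r^m$ that of the $f^{m+\theta}$, and then applies Proposition~\ref{prop:StabThetaCoercive} directly, with the same constants $C_1$, $C_2$. Your additional checks (that the right-hand side enters the stability proof only through its ${V^\varrho_h}^\ast$-norm and that $\Lambda$, $\alpha$, $\beta$ are unchanged) are sound and merely make explicit what the paper leaves implicit.
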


\begin{proof}
By assumption, the bilinear form $a_t(\cdot,\cdot)$ is continuous and coercive uniformly in time. The $\xi_h^m$ thus take the role of the $u^m_h$ in the $\theta$ scheme~\eqref{eq:ThetaSchemeFullydiscretizedPIDE} and the $r^m$ take the role of the $f^{m+\theta}$ therein. Consequently, we can directly apply Proposition~\ref{prop:StabThetaCoercive}. The constants $C_1$, $C_2$ of the corollary are thus identical to the two constants of the lemma.
\end{proof}

To derive convergence of the approximate solution we will show convergence of the right hand side in \eqref{eq:stabxicoerc}. In that respect, Corollary~\ref{cor:stabxicoerc} is the key ingredient to our convergence results for bilinear forms that are both continuous as well as coercive uniformly in time. In preparation of these results we shall now derive upper bounds for the individual residual parts $r^m_1, r^m_2$ and $r^m_3$.


\begin{lemma}[Upper bounds for normed residuals]
\label{lem:UpperBoundsforresiduals}
Let (A1)--(A3) of Conditions~\ref{cond} be satisfied and let $(r_i^m,\cdot)_H$ with $r_i^m :V^\varrho_h\rightarrow\IR$, $i\in\{1,2,3\}$, be the weak residuals derived by the lemma. We require additional smoothness of the weak solution $u$ by assuming further that
	\begin{enumerate}[label=\roman*)]
		\item $u\in W^1(0,T;V^t, H)$ for some $t\geq \orderAhalf$,\label{enum:uinWVt}
		\item $u\in C^2([0,T], H)$.\label{enum:uC2} 
	\end{enumerate}
In case $\theta=\frac{1}{2}$ assume optionally
	\begin{enumerate}[label=\roman*)]
		\setcounter{enumi}{2}
		\item $u\in C^3([0,T], H)$. \label{enum:UpperBoundsOptional}
	\end{enumerate}
Then there exist positive constants $C_{r_1}$, $C_{r_2}$ and $C_{r_3}$ such that
	\begin{align}
		\norm{r_1^m}_{{V^\varrho_h}^\ast} \leq&\  C_{r_1}\begin{cases} \sqrt{\Dt} \left(\int_{t^m}^{t^{m+1}} \norm{\ddot{u}(s)}^2_{{V^\varrho_h}^\ast}  \d{s}\right)^\frac{1}{2},&\quad \theta\in [0,1] \\
		\Dt^\frac{3}{2} \left(\int_{t^m}^{t^{m+1}} \norm{\dddot{u}(s)}^2_{{V^\varrho_h}^\ast}  \d{s} \right)^\frac{1}{2},&\quad \theta = \frac{1}{2}\text{ and given \ref{enum:UpperBoundsOptional} holds} \\
		\end{cases} \label{eq:UpperBoundsforresidualsr1}\\
		\norm{r_2^m}_{{V^\varrho_h}^\ast} \leq&\ C_{r_2}\,\frac{1}{\sqrt{\Dt}} \left(\int_{t^m}^{t^{m+1}} \Upsilon^2\left(h, \orderAhalf, t, \dot{u}(\tau)\right)\d{\tau}\right)^\frac{1}{2}, \\
		\norm{r_3^m}_{{V^\varrho_h}^\ast} \leq&\ C_{r_3}\,\Upsilon(h, \orderAhalf, t, u^{m+\theta})
	\end{align}
for all $m=0,\dots,M-1$.
\end{lemma}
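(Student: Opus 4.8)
The plan is to bound each of the three residuals $r_1^m$, $r_2^m$, $r_3^m$ separately in the norm $\norm{\cdot}_{{V^\varrho_h}^\ast}$ defined in \eqref{eq:Vsstarnorm}, testing against an arbitrary $v_h\in V^\varrho_h$ and taking the supremum over $v_h$ with $\norm{v_h}_{V^\varrho}=1$ at the end of each estimate. Throughout I would exploit that $(r_i^m,\cdot)_H$ is expressed as an $H$-inner product, so that Cauchy--Schwarz gives $(r_i^m,v_h)\le \norm{g_i}_H\norm{v_h}_{V^\varrho}$ whenever I can write $(r_i^m,v_h)=(g_i,v_h)_H$; the crux is to produce a good $H$-representer $g_i$ via Taylor expansion with integral remainder in the Bochner sense.

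The first estimate for $r_1^m$ is the classical time-consistency error of the $\theta$-rule. I would write $\tfrac{u^{m+1}-u^m}{\Dt}-\dot u^{m+\theta}$ using a Taylor expansion of $u$ around the intermediate time $\theta t^{m+1}+(1-\theta)t^m$, so that the leading term is controlled by $\ddot u$ on $[t^m,t^{m+1}]$; the integral remainder plus Cauchy--Schwarz in time produces the factor $\sqrt{\Dt}\big(\int_{t^m}^{t^{m+1}}\norm{\ddot u(s)}_{{V^\varrho_h}^\ast}^2\d s\big)^{1/2}$. For the special case $\theta=\tfrac12$ the first-order terms cancel by symmetry of the midpoint, so one gains an extra power of $\Dt$ and the bound involves $\dddot u$, which is exactly why the optional regularity \ref{enum:UpperBoundsOptional} is invoked there. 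For $r_2^m$, the key observation is that the projector $P_h$ commutes with the difference quotient, so $\tfrac{P_h u^{m+1}-P_h u^m}{\Dt}-\tfrac{u^{m+1}-u^m}{\Dt}=\tfrac{1}{\Dt}\int_{t^m}^{t^{m+1}}(P_h-\mathrm{Id})\dot u(\tau)\d\tau$; I would apply the approximation property \ref{ass:GeneralApproximationProperty} pointwise in $\tau$ to bound $\norm{(\mathrm{Id}-P_h)\dot u(\tau)}_{V^\varrho}\le C_\Upsilon\Upsilon(h,\varrho,t,\dot u(\tau))$, then pull the $\tfrac1\Dt$ out and apply Cauchy--Schwarz in $\tau$ to obtain the stated $\tfrac{1}{\sqrt{\Dt}}\big(\int_{t^m}^{t^{m+1}}\Upsilon^2(h,\varrho,t,\dot u(\tau))\d\tau\big)^{1/2}$.

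The third estimate for $r_3^m$ is the most direct: since $(r_3^m,v_h)=a^{m+\theta}(P_h u^{m+\theta}-u^{m+\theta},v_h)$, I would use uniform-in-time continuity of the bilinear form (Condition (A1), via \eqref{eq:defatcont}) to get $|a^{m+\theta}(P_h u^{m+\theta}-u^{m+\theta},v_h)|\le\alpha\norm{u^{m+\theta}-P_h u^{m+\theta}}_{V^\varrho}\norm{v_h}_{V^\varrho}$, and then the approximation property \ref{ass:GeneralApproximationProperty} bounds the first factor by $C_\Upsilon\Upsilon(h,\varrho,t,u^{m+\theta})$, absorbing constants into $C_{r_3}$. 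The main obstacle I anticipate is the bookkeeping in the dual norm: each residual lives in $({V^\varrho_h})^\ast$ and is given through an $H$-pairing, so I must be careful that the supremum is taken against $\norm{v_h}_{V^\varrho}$ (not $\norm{v_h}_H$), which means the representers of $r_1^m,r_2^m$ should themselves be measured in $\norm{\cdot}_{{V^\varrho_h}^\ast}$ rather than $\norm{\cdot}_H$ wherever $\ddot u$ or $\dddot u$ appear. Making the Taylor remainders rigorous as Bochner integrals with values in the dual space, and verifying that the regularity hypotheses \ref{enum:uinWVt}--\ref{enum:UpperBoundsOptional} are exactly what is needed for those integrals to be finite, is the technically delicate part; the rest is routine Cauchy--Schwarz.
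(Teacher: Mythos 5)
Your proposal follows essentially the same route as the paper's proof: Taylor expansion with Bochner-integral remainder plus H\"older in time for $r_1^m$ (with the symmetric cancellation yielding the extra power of $\Dt$ at $\theta=\tfrac12$), commuting $I-P_h$ with the time integral and applying the approximation property pointwise for $r_2^m$, and uniform continuity of $a$ plus the approximation property for $r_3^m$; the only cosmetic difference is that the paper expands around $t^m$ rather than the intermediate $\theta$-point, which leads to the same kernel bound. You also correctly identify the one genuinely delicate bookkeeping step, namely passing from the $H$-pairing to the $\norm{\cdot}_{{V^\varrho_h}^\ast}$ norm via $\norm{w}_{{V^\varrho_h}^\ast}\le\norm{w}_H\le\norm{w}_{V^\varrho}$, which is exactly how the paper handles it.
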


The proof is provided in Appendix~\ref{sec-proofUpperBoundsforresiduals}.

We are now able to state the core theorem, granting convergence of the $\theta$-Scheme~\eqref{eq:ThetaSchemeFullydiscretizedPIDE}, where the involved bilinear form is continuous and coercive uniformly in time.

\begin{theorem}[Convergence of the coercive $\theta$ scheme]
\label{thrm:ConvergenceCoercive}
We assume Condition \ref{cond} holds and that  $u\in W^1(0,T;V^t,H)$ is the weak solution to problem~\eqref{eq:evol}.  Furthermore, we assume
	\begin{enumerate}[label=\roman*)]
		\item $u$ to be smooth enough in the sense that $u\in C^2([0,T],H)$,
		\item and for $\theta\in[0, 1/2)$ let the time stepping condition~\eqref{eq:stabcoercivedt} be satisfied.
	\end{enumerate}
In case $\theta=\frac{1}{2}$ assume optionally
	\begin{enumerate}[label=\roman*)]
		\setcounter{enumi}{2}
		\item $u\in C^3([0,T],H)$. \label{enum:CoercCondConvuC3}
	\end{enumerate}
Be $(u_h^m)_{m\in\{0,\dots,M\}}$ the solution to the associated $\theta$-Scheme~\eqref{eq:ThetaSchemeFullydiscretizedPIDE} with $\theta \in [0,1]$.

Then there exists a constant $\overline{C}>0$ such that
\begin{equation}
	\begin{split}
		\norm{u^M - u_h^M}^2 + \Dt\sum_{m=0}^{M-1}&\norm{u^{m+\theta} - u_h^{m+\theta}}_{a^{m+\theta}}^2\\
		\leq&\ \overline{C}\,\max\limits_{0\leq \tau\leq T}\Upsilon^2(h,\orderAhalf, t, u(\tau))\\
		+&\ \overline{C}\,\int_0^T \Upsilon^2(h,\orderAhalf,t,\dot{u}(\tau))\d{\tau}\\
		+&\ \overline{C}\,{\begin{cases}
							(\Dt)^2 \int_0^T \norm{\ddot{u}(s)}^2_{{V^\varrho_h}^\ast} \d{s}, & \forall \theta \in [0,1]\\
							(\Dt)^4 \int_0^T \norm{\dddot{u}(s)}^2_{{V^\varrho_h}^\ast}\d{s}, & \theta =\frac{1}{2}\text{ and with \ref{enum:CoercCondConvuC3}.}
						\end{cases}}
	\end{split}
	\end{equation}
\end{theorem}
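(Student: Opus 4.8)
The plan is to build on the decomposition $e_h^m=\eta^m+\xi_h^m$ from \eqref{eq:defehm} and to treat the projection part $\eta^m$ and the scheme-deviation part $\xi_h^m$ by completely separate arguments. First I would split the left-hand side: for the terminal contribution, a triangle inequality in $H$ gives $\norm{u^M-u_h^M}^2\leq 2\norm{\eta^M}^2+2\norm{\xi_h^M}^2$; for the energy-norm sum I would use $u^{m+\theta}-u_h^{m+\theta}=\eta^{m+\theta}+\xi_h^{m+\theta}$ (which holds because all quantities are built by the same convex combination) together with $\norm{\eta^{m+\theta}+\xi_h^{m+\theta}}_{a^{m+\theta}}^2\leq 2\norm{\eta^{m+\theta}}_{a^{m+\theta}}^2+2\norm{\xi_h^{m+\theta}}_{a^{m+\theta}}^2$. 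It then suffices to bound the four resulting groups and match them to the three terms on the right-hand side.

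For the two $\eta$-groups I would use only the approximation property, Condition~\ref{ass:GeneralApproximationProperty}, together with the continuous embedding $V^\varrho\hookrightarrow H$ and the energy-norm equivalence $\norm{v}_{a_t}\leq\sqrt{\alpha}\,\norm{v}_{V^\varrho}$. Since $P_h$ is linear, $\eta^{m+\theta}=\theta\eta^{m+1}+(1-\theta)\eta^m$, so $\norm{\eta^{m+\theta}}_{V^\varrho}\leq C_\Upsilon\max_{0\leq\tau\leq T}\Upsilon(h,\varrho,t,u(\tau))$; the estimate $\Dt\sum_{m=0}^{M-1}(\cdot)\leq T\max_m(\cdot)$ then turns the energy-norm sum into a $\max_\tau\Upsilon^2$ contribution, and the same bound controls $\norm{\eta^M}^2$. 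Both feed the first term of the theorem.

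For the two $\xi$-groups I would invoke Corollary~\ref{cor:stabxicoerc}: because $\xi_h^m$ solves the auxiliary $\theta$-scheme of Lemma~\ref{lem:residual}, it bounds $\norm{\xi_h^M}^2+\Dt\,C_1\sum_m\norm{\xi_h^{m+\theta}}_{a^{m+\theta}}^2$ by $\norm{\xi_h^0}^2+\Dt\,C_2\sum_m\norm{r^m}_{{V^\varrho_h}^\ast}^2$, so dividing by $C_1$ also controls the $\xi$-part of the energy-norm sum. The initial term is handled through $\xi_h^0=P_hg-g_h$, a triangle inequality, Condition~\ref{ass:quasioptinitial} and once more Condition~\ref{ass:GeneralApproximationProperty}, producing a further $\max_\tau\Upsilon^2$ contribution. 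For the residual term I would use $\norm{r^m}_{{V^\varrho_h}^\ast}^2\leq 3(\norm{r_1^m}_{{V^\varrho_h}^\ast}^2+\norm{r_2^m}_{{V^\varrho_h}^\ast}^2+\norm{r_3^m}_{{V^\varrho_h}^\ast}^2)$ and insert the bounds of Lemma~\ref{lem:UpperBoundsforresiduals}, whose smoothness hypotheses ($u\in C^2$, respectively $C^3$ for $\theta=\frac{1}{2}$) are exactly those assumed here. The elementary identities $\Dt\sum_m\int_{t^m}^{t^{m+1}}(\cdot)=\Dt\int_0^T(\cdot)$ and $\sum_m\int_{t^m}^{t^{m+1}}(\cdot)=\int_0^T(\cdot)$ then convert the $r_1$-sum into the $(\Dt)^2\int_0^T\norm{\ddot u}_{{V^\varrho_h}^\ast}^2\,\d s$ term (and, for $\theta=\frac{1}{2}$, the $(\Dt)^4\int_0^T\norm{\dddot u}_{{V^\varrho_h}^\ast}^2\,\d s$ term), the $r_2$-sum into $\int_0^T\Upsilon^2(h,\varrho,t,\dot u(\tau))\,\d\tau$, while $\Dt\sum_m(\cdot)\leq T\max_m(\cdot)$ turns the $r_3$-sum into another $\max_\tau\Upsilon^2$ term. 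Collecting the four groups and absorbing every constant, among them $2/C_1$, $C_\Upsilon$, $\alpha$, the embedding constant, $C_I$, $C_2$ and the $C_{r_i}$, into a single $\overline C$ yields the stated estimate.

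The main obstacle, and the reason the proof deviates from \cite{PetersdorffSchwab2003}, is the time-dependence of the energy norm: each energy-norm term carries its own index $a^{m+\theta}$, so every passage between the $a^{m+\theta}$-norms and the single norm $\norm{\cdot}_{V^\varrho}$ must be carried out uniformly in $m$ through the constants $\alpha,\beta$ of Definition~\ref{def:acontinuous}, and it is precisely this uniformity that keeps $\overline C$ independent of $M$. A secondary technical point is that $u^{m+\theta}=\theta u^{m+1}+(1-\theta)u^m$ is a convex combination rather than a single time evaluation, so bounding $\Upsilon(h,\varrho,t,u^{m+\theta})$ and $\norm{\eta^{m+\theta}}_{V^\varrho}$ by $\max_\tau\Upsilon$ relies on the linearity of $P_h$ and on subadditivity and positive homogeneity of $\Upsilon(h,\varrho,t,\cdot)$ in its last argument.
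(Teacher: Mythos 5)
Your proposal follows essentially the same route as the paper's own proof: the splitting $e_h^m=\eta^m+\xi_h^m$ with $\|a+b\|^2\le 2\|a\|^2+2\|b\|^2$, the $\eta$-part handled by continuity of $a$, linearity of $P_h$ and Condition~(A2), and the $\xi$-part handled by Corollary~\ref{cor:stabxicoerc}, the bound on $\xi_h^0$ via (A2)--(A3), and Lemma~\ref{lem:UpperBoundsforresiduals} with the sums converted to integrals over $[0,T]$. The only differences are cosmetic (you use the sharper factor $3$ where the paper uses $4$ for the three residuals, and you make explicit the subadditivity of $\Upsilon$ in its last argument needed to pass from $u^{m+\theta}$ to $\max_\tau\Upsilon(h,\varrho,t,u(\tau))$, a point the paper leaves implicit).
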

We notice that the norms $\norm{\ddot{u}}_{{V^\varrho_h}^\ast}$ and $\norm{\ddot{u}}_{{V^\varrho_h}^\ast}$ on the right hand side can be replaced by $\norm{\ddot{u}}_{{V^\varrho}^\ast}$ and $\norm{\ddot{u}}_{{V^\varrho}^\ast}$ since $\le \norm{u}_{{V^\varrho}^\ast}$
since $\norm{u}_{{V^\varrho_h}^\ast} \le \norm{u}_{{V^\varrho}^\ast}$.

The proof of Theorem \ref{thrm:ConvergenceCoercive} is provided in Appendix \ref{sec:proofthrm:ConvergenceCoercive}.

\begin{corollary}[Convergence of the $\theta$ scheme]
\label{cor:ConvThetaCoerciveVnorm}
Under the assumptions of Theorem \ref{thrm:ConvergenceCoercive} there exists $\overline{C}>0$ such that
\begin{equation}
	\begin{split}
		\norm{u^M - u_h^M}^2 +\ \Dt\sum_{m=0}^{M-1}&\norm{u^{m+\theta} - u_h^{m+\theta}}_{V}^2\\
		\leq&\ \overline{C}\,\max\limits_{0\leq \tau\leq T}\Upsilon^2(h, \orderAhalf, t u(\tau))\\
		+&\ \overline{C}\,{\begin{cases}
							(\Dt)^2 \int_0^T \norm{\ddot{u}(\tau)}^2_{{V^\varrho_h}^\ast} \d{\tau}, & \forall \theta \in [0,1]\\
							(\Dt)^4 \int_0^T \norm{\dddot{u}(\tau)}^2_{{V^\varrho_h}^\ast}\d{\tau}, & \theta =\frac{1}{2}\text{ and with \ref{enum:CoercCondConvuC3}}
						\end{cases}}\\
		 +&\ \overline{C}\,\int_0^T \Upsilon^2(h, \orderAhalf, t, \dot{u}(\tau))\d{\tau}
	\end{split}
	\end{equation}
\end{corollary}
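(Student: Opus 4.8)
The plan is to observe that Corollary~\ref{cor:ConvThetaCoerciveVnorm} differs from Theorem~\ref{thrm:ConvergenceCoercive} only on the left-hand side: the time-dependent energy norm $\norm{\,\cdot\,}_{a^{m+\theta}}$ has been replaced by the norm $\norm{\,\cdot\,}_{V^\varrho}$, while the three summands on the right-hand sides agree up to a reordering. Hence the entire statement should follow from the theorem by a single application of uniform coercivity, in complete analogy to how Corollary~\ref{cor:StabThetaCoerciveVnorm} is deduced from Proposition~\ref{prop:StabThetaCoercive}.

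First I would invoke the lower bound of the energy-norm equivalence, i.e.\ coercivity in the sense of Definition~\ref{def:acontinuous}(ii): $\beta\norm{v}_{V^\varrho}^2 \leq a_t(v,v) = \norm{v}_{a_t}^2$ for every $v\in V^\varrho$ and every $t\in[0,T]$. Applying this to $v = u^{m+\theta} - u_h^{m+\theta}$ at the time point $t = \theta t^{m+1} + (1-\theta)t^m$ and summing over $m$ yields
\begin{equation*}
\Dt \sum_{m=0}^{M-1}\norm{u^{m+\theta} - u_h^{m+\theta}}_{V^\varrho}^2 \leq \frac{1}{\beta}\,\Dt \sum_{m=0}^{M-1}\norm{u^{m+\theta} - u_h^{m+\theta}}_{a^{m+\theta}}^2 .
\end{equation*}
Adding the common $H$-norm term $\norm{u^M - u_h^M}^2$ to both sides shows that the left-hand side of the corollary is bounded by $\max\{1, 1/\beta\}$ times the left-hand side of Theorem~\ref{thrm:ConvergenceCoercive}. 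The theorem in turn bounds the latter by $\overline{C}_{\mathrm{Thm}}$ times its right-hand side, so it suffices to set $\overline{C} := \max\{1,1/\beta\}\,\overline{C}_{\mathrm{Thm}}$ to recover the claimed inequality, the three error contributions being exactly those of the theorem.

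I do not expect a genuine obstacle here: the argument is essentially a one-line consequence of coercivity together with the theorem already in hand. The only point requiring minor care is the bookkeeping of the constant, since the factor $1/\beta$ improves the estimate when $\beta\geq 1$ but enlarges it when $\beta<1$; taking the maximum with $1$ covers both regimes uniformly and keeps $\overline{C}$ independent of $h$ and $M$. One should also record, as the remark following the theorem does, that the residual norms $\norm{\,\cdot\,}_{{V^\varrho_h}^\ast}$ may be replaced by the larger $\norm{\,\cdot\,}_{{V^\varrho}^\ast}$, so that no additional $h$-dependence is introduced through this step.
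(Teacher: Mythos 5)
Your proposal is correct and matches the paper's own argument: the corollary is deduced from Theorem~\ref{thrm:ConvergenceCoercive} by a single application of uniform coercivity, $\beta\norm{v}_{V^\varrho}^2 \leq \norm{v}_{a^{m+\theta}}^2$, exactly as in the passage from Proposition~\ref{prop:StabThetaCoercive} to Corollary~\ref{cor:StabThetaCoerciveVnorm}. Your explicit bookkeeping of the constant via $\max\{1,1/\beta\}$ is a harmless elaboration of what the paper leaves implicit.
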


\begin{proof}
The result is an immediate consequence from Theorem \ref{thrm:ConvergenceCoercive} and the fact that $a_t(\cdot,\cdot)$ is coercive uniformly in time with coercivity constant $\beta$.
\end{proof}

In Theorem~\ref{thrm:ConvergenceCoercive} and Corollary~\ref{cor:ConvThetaCoerciveVnorm}, respectively, we have derived abstract convergence results. The particular convergence now follows immediately, when the form of $\Upsilon$ of Assumption~\ref{ass:GeneralApproximationProperty}, the general approximation property, is specified. The following corollary combines the result of 

\begin{corollary}[Convergence with $\Upsilon$ of \cite{PetersdorffSchwab2003}]
\label{cor:ConvergenceCoerciveSchwab}
Under the assumptions of Theorem~\ref{thrm:ConvergenceCoercive} and in the setting of \cite{PetersdorffSchwab2003} outlined in Example~\ref{ex:SchwabApproximationProperty}  there exists a constant $\overline{C}>0$ such that
\begin{align}
		\norm{u^M - u_h^M}^2 + \Dt&\sum_{m=0}^{M-1}\norm{u^{m+\theta} - u_h^{m+\theta}}_{a^{m+\theta}}^2\notag\\
		\leq&\ \overline{C}\,h^{2(p+1-\orderAhalf)}\,\max\limits_{0\leq \tau\leq T}\norm{u(\tau)}^2_{\mathcal{H}^{p+1}(\Omega)}\notag\\
		+&\ \overline{C}\,h^{2(p+1-\orderAhalf)}\,\int_0^T \norm{\dot{u}(\tau)}^2_{\mathcal{H}^{p+1}(\Omega)}\d{\tau}\\
		+&\ \overline{C}\,{\begin{cases}
							(\Dt)^2 \int_0^T \norm{\ddot{u}(s)}^2_{{V_h^{\orderAhalf}}^\ast} \d{s}, & \forall \theta \in [0,1]\\
							(\Dt)^4 \int_0^T \norm{\dddot{u}(s)}^2_{{V_h^{\orderAhalf}}^\ast}\d{s}, & \theta =\frac{1}{2}\text{ and if $u\in C^3([0,T],H)$.}
						\end{cases}}\notag
	\end{align}
\end{corollary}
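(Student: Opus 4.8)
The plan is to obtain this corollary as a direct specialization of the abstract convergence estimate in Theorem~\ref{thrm:ConvergenceCoercive}, simply by inserting the explicit form of the approximation function $\Upsilon$ that is valid in the setting of \cite{PetersdorffSchwab2003}.

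First I would recall from Example~\ref{ex:SchwabApproximationProperty} that in their setting---where $V^\varrho_h$ is built from piecewise polynomials of degree $p$ on a mesh of fineness $h$, and the smoothness space is the Sobolev space $\mathcal{H}^{p+1}(\Omega)$---the approximation property (A2) of Condition~\ref{cond} holds with the explicit function
\begin{equation*}
	\Upsilon(h,\orderAhalf,t,u) = h^{p+1-\orderAhalf}\,\norm{u}_{\mathcal{H}^{p+1}(\Omega)},
\end{equation*}
that is, with the abstract smoothness index $t$ identified with $p+1$ and with $\orderAhalf=\varrho$. Next I would check that all hypotheses of Theorem~\ref{thrm:ConvergenceCoercive} are in force: Condition~\ref{cond} (in particular (A2) with the above $\Upsilon$), the regularity $u\in C^2([0,T],H)$, and, for $\theta\in[0,\tfrac{1}{2})$, the time-stepping condition~\eqref{eq:stabcoercivedt}; in the Crank--Nicolson case $\theta=\tfrac{1}{2}$ the optional assumption $u\in C^3([0,T],H)$ is likewise inherited from the corollary's hypotheses.

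The remaining step is to substitute this $\Upsilon$ into the three terms on the right-hand side of the estimate in Theorem~\ref{thrm:ConvergenceCoercive}. Because $\Upsilon$ factorizes as a power of $h$ times a norm of its function argument, squaring and pulling the $h$-power outside the maximum and the time integral turns the first term into $\overline{C}\,h^{2(p+1-\orderAhalf)}\max_{0\le\tau\le T}\norm{u(\tau)}^2_{\mathcal{H}^{p+1}(\Omega)}$ and the second into $\overline{C}\,h^{2(p+1-\orderAhalf)}\int_0^T\norm{\dot{u}(\tau)}^2_{\mathcal{H}^{p+1}(\Omega)}\d{\tau}$. The time-discretization term does not involve $\Upsilon$ and is carried over verbatim, noting that $V_h^{\orderAhalf}=V_h^\varrho$. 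Absorbing the constant $C_\Upsilon$ from (A2) into the generic constant $\overline{C}$ then produces exactly the claimed bound.

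There is no genuine obstacle here, since the corollary is purely a specialization of Theorem~\ref{thrm:ConvergenceCoercive}. The only point demanding care is the bookkeeping of indices: one must verify that the abstract smoothness parameter $t$ is correctly identified with $p+1$ and that $\orderAhalf=\varrho$ is used consistently, so that the resulting exponents $2(p+1-\orderAhalf)$ and the spaces $\mathcal{H}^{p+1}(\Omega)$ and $V_h^\varrho$ match those appearing in the abstract theorem.
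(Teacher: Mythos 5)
Your proposal is correct and follows exactly the paper's own proof: the corollary is obtained by specializing Theorem~\ref{thrm:ConvergenceCoercive} with $\Upsilon(h,s,t,u)=h^{t-s}\norm{u}_{\mathcal{H}^t(\Omega)}$ from Example~\ref{ex:SchwabApproximationProperty} and taking the smoothness index $t$ at its maximal admissible value $p+1$. Your additional bookkeeping on factoring the $h$-power out of the maximum and the integral, and on verifying the theorem's hypotheses, matches what the paper leaves implicit.
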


\begin{proof}
The result is a direct consequence from Theorem~\ref{thrm:ConvergenceCoercive} with
	\begin{equation*}
		\Upsilon(h, s, t, u) = h^{t-s}\norm{u}_{\mathcal{H}^t(\Omega)},
	\end{equation*}
taking $t\leq p+1$ equal to its maximal admissible value with $p$ the polynomial degree that the basis functions of $V^{\orderAhalf}_h$ achieve piecewisely.
\end{proof}

The result of Corollary~\ref{cor:ConvergenceCoerciveSchwab} confirms the order of convergence derived in Theorem~5.4 of \cite{PetersdorffSchwab2003}. In contrast to that former result which our analysis is based on, we allow for time-dependent bilinear forms and thus generalize their result to the time-inhomogeneous case.




	\section{Discussion of the Main Assumptions}\label{sec-discuss-assumptions}
To set the assumptions introduced in the last section into the perspective of their applicability to option pricing equations in time-inhomogeneous L\'evy models let us first introduce some necessary notation.

Let a stochastic basis $(\Omega,\OF_T, (\OF_t)_{0\le t\le T}, P)$ be given and let
$L$ be an $\rrd$-valued \emph{time-inhomogeneous L\'evy process} with characteristics $(b_t,\sigma_t,F_t;h)_{t\ge0}$. Here, for every $s>0$, $\sigma_s$ is a symmetric, positive semi-definite $d\times d$-matrix, $b_s\in \rr^d$, and $F_s$ is a L\'evy measure, i.e. a positive Borel measure on $\rrd$ with $F_s(\{0\})=0$ and $\int_{\rr^d} (|x|^2 \wedge 1) F_s(\d{x})  < \infty$. Moreover, $h$ is a truncation function i.e. $h:\rr^d\to\rr$ such that $\int_{\{|x|>1\}} |h(x)| F_t(\d{x})<\infty$ with $h(x)=x$ in a neighbourhood of $0$. As usual we assume the maps $s\mapsto \sigma_s$, $s \mapsto b_s$ and $s\mapsto\int (|x|^2\wedge 1) F_s(\d{x})$ to be Borel-measurable with, for every $T>0$,
\begin{equation*}
\int_0^{T} \Big( |b_s| + \|\sigma_s\|_{\OM(d\times d)} +\int_{\rr^d} (|x|^2 \wedge 1) F_s(\d{x}) \Big) \d{s} < \infty,
\end{equation*}
 where $ \|\cdot\|_{\OM(d\times d)}$ is a norm on the vector space formed by the $d\times d$-matrices.

The process $L$ has independent increments and for fixed $t\ge0$ its \emph{infinitesimal generator} is given by
\begin{align}\label{def-A}
\begin{split}
\OG_t \varphi(x):= &  \frac{1}{2}\sum_{j,k=1}^d \sigma^{j,k}_t\frac{\partial^2 \varphi}{\partial x_j\partial x_k}(x) + \sum_{j=1}^d b^j_t\frac{\partial \varphi}{\partial x_j}(x)\\
& + \int_{\rr^d}\Big( \varphi(x+y)-\varphi(x)-\sum_{j=1}^d\frac{\partial \varphi}{\partial x_j}(x) \,  h_j(y)\Big)F_t(\d{y})
\end{split}
\end{align}
for every $\varphi\in C^\infty_0(\rrd)$, where $h_j$ denotes the $j$-th component of the truncation function\tild$h$.

\subsection{Solution spaces for different option types and models}	
We consider the stock price modelled by $S_t=S_0\ee{L_t}$.
Let
\begin{equation}\label{Kolm-op}
\OA_t\varphi(x):=-\OG_t\varphi(x) + r_t\varphi(x) + \kappa_t(x)\varphi(x)
\end{equation}
with deterministic, measurable and bounded \textit{interest rate} $r_{(\cdot)}:[0,T]\to\rr$ and measurable \textit{killing rate} $\kappa_{(\cdot)}(\cdot):[0,T]\times\rrd\to\rr$. \cite{Glau2016} establishes sufficient conditions on the L\'evy process and the killing rate yielding a Feynman-Kac representation of the weak solution $u$ in $W^1( 0,T; V^\varrho,H)$
of evolution equation \eqref{eq:evol} of the form
\begin{equation}\label{gl-stochdarst}
 \begin{split}
  u(T-t,x)
&=
E_{t,x}\Big(g(L_T)\ee{-\int_t^T  (r_h+ \kappa_h(L_{h})) \d{h}} \\
&
\qquad\qquad + \int_t^{T} \! f(T-\tau,L_\tau)\ee{-\int_t^\tau   (r_h+\kappa_h(L_{h})) \d{h}} \d{\tau}\Big).
 \end{split}
\end{equation}
For $\kappa\equiv0$ and $f\equiv0$, \eqref{gl-stochdarst} expresses the price of a plain vanilla option with payout function $g$. The additional freedom provided by $\kappa$ can for example be used to define a class of employee options.

Here the $V^\varrho$ is a weighted Sobolev-Slobodeckii space with an index $\varrho\in[1,2]$ and $H$ is a weighted $L^2$ space. More precisely, the \emph{exponentially weighted Sobolev-Slobodeckii space} $H^s_\eta(\rrd)$ with an index $s\ge0$ and a weight $\eta\in \rrd$ is the completion of $C_0^\infty(\rrd)$ with respect to the norm $\|\cdot\|_{H^s_\eta}$ given\tild by
\begin{equation*}
\|\varphi\|_{H^s_\eta}^2:= \int_{\rrd} \big(1+|\xi|\big)^{2s}\big|\OF(\varphi)(\xi - i \eta)\big|^2 \d{\xi}.
\end{equation*}
For $s=0$ the space $H^s_\eta(\rrd)$ coincides with the weighted space 
$L^2_\eta(\rr^d) := \{ u\in L^1_{loc}(\rr^d)\, | \, x\mapsto u(x)\ee{\langle \eta, x\rangle } \in L^2(\rr^d) \}$. 
A related class of function spaces that plays a fundamental role in this context are the spaces $\widetilde{H}^s_\eta(D)$ for open subsets $D\subset\rrd$ that are defined by
\begin{equation}
\label{eq:Hetastilde}
\widetilde{H}^s_\eta(D):=\big\{ v \in H^s_\eta(\rrd)\,\big|\, v|_{D^c}=0 \big\}.
\end{equation}
A Feynman-Kac type formula by \cite{PhdGlau} characterizes prices of barrier options in L\'evy models by weak solutions in these spaces. Second, typically the first step in the discretization of the evolution equation \eqref{eq:evol} is the truncation of the equation to a finite domain. This is formalized by restricting equation \eqref{def-para} for $V^\varrho:= H^\varrho_\eta(\rrd)$ to functions $v$ from $\widetilde{H}^\varrho_\eta(D)$ respectively from $\widetilde{H}^\varrho_0(D)$ for a bounded open domain in $D\subset\rrd$.

%

Examples of driving time-inhomogeneous L\'evy processes $L$ with the corresponding Sobolev-Slobodeckii spaces  as solution spaces can be found in \cite{EberleinGlau2014} and \cite{Glau2016}.
These include jump diffusions as well as pure jump processes such as of tempered stable, normal inverse Gaussian, student-$t$, Cauchy and Generalized Hyperbolic type.



Last, let us mention the more general class of anisotropic Sobolev-Slobodeckii spaces. For a large class of time-homogeneous multivariate processes \cite{HilberReichmannSchwabWinter2013} show existence  and uniqueness of the weak solution of the related evolution equation in anisotropic Sobolev-Slobodeckii spaces.

Notice that the operators of the type described in this subsection, when setting the killing rate to zero, overlap with the so-called fractional Laplace operator.

	\subsection{Coercivity}
	
	While the operators that arise in finance typically are not coercive and only satisfy a G{\aa}rding inequality, we restrict our analysis to operators that are coercive uniformly in time. Yet, a simple time transformation allows us to transform a linear PIDE whose operator only satisfies a G{\aa}rding inequality into a linear PIDE with coercive operator. To this end, consider the PIDE
	\begin{equation}
	\label{eq:PIDEgarding}
	\begin{split}
		\partial_t u + \mathcal{A}_t^\text{G{\aa}rding}u =&\ f,\\
		u(0) =&\ g,
	\end{split}
	\end{equation}
with weak solution $u\in W^1(0,T;V^\varrho,H)$, an operator $\mathcal{A}_t^\text{G{\aa}rding}$ that is assumed to be both continuous and of G{\aa}rding type uniformly in time with respect to the space $V^\varrho$ and that is associated with a bilinear form
	\begin{equation}
	\label{eq:bilineargarding}
		a_{(\cdot)}^\text{G{\aa}rding}: [0,T] \times V^\varrho \times V^\varrho \rightarrow \mathbb{R},\qquad (t,u,v)\mapsto a_t(u,v),
	\end{equation}
that fulfills a G{\aa}rding inequality uniformly in time, i.e.\
there exists constants $\beta,\lambda>0$ independently of $t$ such that
	\begin{equation}
	\label{eq:agarding}
		a_t^\text{G{\aa}rding}(u,u) \geq \beta \norm{u}^2_{V^\varrho} -\lambda \norm{u}_{H}
	\end{equation}
holds for all $u \in V^\varrho$ and for all $t\in [0,T]$.
Furthermore, let $a^\text{G{\aa}rding}$ be continuous with continuity constant denoted by $\alpha$.  Then, defining
	\begin{equation}
	\label{eq:uandulambda}
	\begin{split}
		u_\lambda(t,x) =&\ e^{-\lambda t}u(t,x),\qquad \forall (t,x)\in [0,T]\times\IR\\
		f_\lambda(t,x) =&\ e^{-\lambda t} f(t,x),\qquad \forall (t,x)\in [0,T]\times\IR
	\end{split}
	\end{equation}
and inserting \eqref{eq:uandulambda} into \eqref{eq:PIDEgarding}  and performing elementary manipulations yields
	\begin{equation}
	\begin{split}
	\label{eq:PIDEcoercified}
		\partial_t u_\lambda(t,x) + \mathcal{A}_{(\lambda,t)} u_\lambda(t,x) =&\ f_\lambda(t,x)\\
		u_\lambda(0) =&\ g,
	\end{split}
	\end{equation}
when we define the operator $\mathcal{A}_{(\lambda,\cdot)}$ by
	\begin{equation}
		\mathcal{A}_{(\lambda,\cdot)} = \left(\mathcal{A}_\cdot^\text{G{\aa}rding} + \lambda\right).
	\end{equation}
In contrast to $a_{(\cdot,\cdot)}^\text{G{\aa}rding}(\cdot,\cdot)$ of \eqref{eq:bilineargarding}, the associated bilinear form
	\begin{equation}
		a_{(\lambda,\cdot)} (\cdot,\cdot) : [0,T] \times V^\varrho \times V^\varrho \rightarrow \mathbb{R},\qquad (t,u,v)\mapsto {a_\lambda}_t(u,v),
	\end{equation}
is now coercive uniformly in time which we indicate by the subscript $\lambda$. The coercivity constant of $a_{(\lambda,\cdot)}$ is $\beta$, its continuity constant is given by $\alpha_\lambda = \alpha+ \lambda$. 


This elementary transformation shows that parabolic equations whose bilinear forms satisfy continuity and a G{\aa}rding inequality (both uniformly in time) can be transformed to parabolic problems whose bilinear form is continuous and coercive (both uniformly in time). This justifies the restriction to coercive problems. For practical applications, however, this approach still imposes a considerable restriction. The resulting error analysis only applies to the discrete scheme of the modified equation \eqref{eq:PIDEcoercified}, and not to the original$\theta$-Scheme \ref{Theta:xihm}. In order to implement the $\theta$-Scheme of the modified equation, the constant $\lambda$ must be available explicitly. Moreover, the multiplication of the sought-for solution $u$ by $\ee{\lambda t}$ has a negative influence on the condition number of the problem since different times $t$ are weighted differently. For these reasons the treatment of the more
general non-coercive case is of interest as well. Generalizing the proofs to a non-coercive bilinear form that satisfies the G{\aa}rding inequality, however, considerably complicates the matter. This case is treated in the separate article \cite{GassGlau2020b}, where the analysis rests on the results derived here.

%
%

\subsection{Approximation property}	
	\label{sec-approxprop}
	
\begin{example}[The setting of \cite{PetersdorffSchwab2003}]
\label{ex:SchwabApproximationProperty}
We present a first example of a specific instance for Assumption~(A2). In \cite{PetersdorffSchwab2003}, the authors consider the space
	\begin{equation*}
		\mathcal{H}^s(\Omega) = \begin{cases}
									V = \widetilde{W}^{\orderAhalf}(\Omega), & s=\orderAhalf\\
									V \cap H^s(\Omega), & s>\orderAhalf,
								\end{cases}
	\end{equation*}
for $\Omega\subset \IR^d$ a bounded domain with Lipschitz boundary $\Gamma = \partial\Omega$ and $\orderAhalf\in[0,1]$ the order of the possibly nonlocal operator $\mathcal{A}$ in problem~\eqref{eq:evol} with the space $\widetilde{W}^s(\Omega)$ defined as
	\begin{equation*}
		\widetilde{W}^s(\Omega) = \{u|_\Omega\,\big|\, u\in H^s(\IR^d),\, u|_{\IR^d\backslash\Omega} = 0\}.
	\end{equation*}
The discrete approximation $(u_h^m)_{m\in\{0,\dots,M\}}$ lies in $V_h\in\{V_h\}_{h>0}\subset V$, a finite dimensional subspace based on piecewise polynomials of degree $p\geq 0$, see Section~3.4.1 in \cite{PetersdorffSchwab2003} for details. Equation~(3.3) in \cite{PetersdorffSchwab2003} assumes that for all $u\in\mathcal{H}^t(\Omega)$ with $t\geq \orderAhalf$ there exists a $u_h\in V_h$ such that for $0\leq s \leq \orderAhalf$ and $\orderAhalf \leq t \leq p+1$
	\begin{equation}
	\label{eq:SchwabApproximationProperty}
		\norm{u-u_h}_{\tilde{H}^s(\Omega)} \leq c\,h^{t-s}\norm{u}_{\mathcal{H}^t(\Omega)}
	\end{equation}
for some $c>0$. The general function $\Upsilon$ of Assumption~(A2) is thus defined as
	\begin{equation}
	\label{eq:SchwabUpsilon}
		\Upsilon(h, s, t, u) = h^{t-s}\norm{u}_{\mathcal{H}^t(\Omega)}
	\end{equation}
for all $u\in\mathcal{H}^t(\Omega)$ and $C_\Upsilon = c$.
\end{example}



We present a further example for Sobolev spaces with integer index that originates from the results of~\cite{Beirao2014}. 

\begin{example}[Results from \cite{Beirao2014}]
\label{ex:SettingBeirao}
Let $\Omega\subset\IR$ be a bounded domain. Let further $s,t\in\IN_0$ with $0\leq s\leq t\leq p+1$ with $p\in\IN$. Consider the space of B-splines with degree $p$ spanned over a partition $\widetilde{\Delta}$, confer~\cite{Beirao2014} for details. Then there exists a projector
	\begin{equation}
		\Pi_{p,\widetilde{\Delta}}\,:\,H^{p+1}(\Omega) \rightarrow S_p(\widetilde{\Delta})
	\end{equation}
from the Sobolev space $H^{p+1}(\Omega)$ onto $S_p(\widetilde{\Delta})$, the space spanned by B-splines with degree $p$ such that with
	\begin{equation}
		V = H^s(\Omega)
	\end{equation}
there exists a constant $C(p)>0$ such that for all $u\in H^t(\Omega)$
	\begin{equation}
		\norm{u-\Pi_{p,\widetilde{\Delta}}u}_{H^s(\Omega)} \leq Ch^{t-s}\norm{u}_{H^t(\Omega)}
	\end{equation}
holds.
\end{example}

The results of Example~\ref{ex:SettingBeirao} also extend to non-integer Sobolev spaces. Consider for example Theorem~2.3.2 in~\cite{Roop2006} or similarly Theorem~7.2 in~\cite{ErvinRoop2007} for a verification of the approximation property in a fractional Sobolev space setting. Moreover, we refer to \cite{Takacs2015}, \cite{KarkulikMelenk2015} and \cite{Du2013} for further results and examples on the abstract approximation property of Condition~(A2).

Additionally, consider Definition~1.9 of the Ph.D. thesis of \cite{Schoetzau1999}, where a projector~$\Pi_l^r$ is defined. In Theorem~1.19 and Corollary~1.20, the author then derives approximation results for that projector. These results present themselves in the spirit of Condition~(A2) and hold for integer and non-integer Sobolev spaces, respectively.


\appendix
\section{Proof for Stability Estimates}

\begin{proof}[of Proposition~\ref{prop:StabThetaCoercive}]
The proof follows the structure of the proof of Proposition 4.1 by \cite{PetersdorffSchwab2003} replacing their norm $\norm{\cdot}_\ast$ by norm $\norm{\cdot}_{{V^\varrho_h}^\ast}$ as defined in equation \eqref{eq:Vsstarnorm}. At the core of the proof lies verifying that
	\begin{equation}
	\label{eq:stabcoerciveXm}
		X^m := \norm{u_h^m}_H^2 - \norm{u_h^{m+1}}_H^2 - \Dt\, C_1\norm{u_h^{m+\theta}}_{a^{m+\theta}}^2 + \Dt\, C_2 \norm{f^{m+\theta}}^2_{{V^\varrho_h}^\ast}\geq 0,
	\end{equation}
for all $m\in\{0,\dots,M-1\}$, since summing up the $X^m$, $m\in\{0,\dots,M-1\}$, then yields
	\begin{equation*}
		\sum_{m=0}^{M-1} X^m = \norm{u_h^0}_H^2 - \norm{u_h^{M}}_H^2 - \Dt\, C_1 \sum_{m=0}^{M-1}\norm{u_h^{m+\theta}}_{a^{m+\theta}}^2 + \Dt\, C_2 \sum_{m=0}^{M-1}\norm{f^{m+\theta}}^2_{{V^\varrho_h}^\ast} \geq 0,
	\end{equation*}
from which by simple rearrangement of terms if follows that
	\begin{equation*}
		\norm{u_h^M}_H^2 +  \Dt\, C_1 \sum_{m=0}^{M-1} \norm{u_h^{m+\theta}}_{a^{m+\theta}}^2 \leq \norm{u_h^0}_H^2 + \Dt\, C_2 \sum_{m=0}^{M-1}\norm{f^{m+\theta}}_{{V^\varrho_h}^\ast}^2,
	\end{equation*}
which shows the claim.

Fix $m\in\{0,\dots,M-1\}$ and define
	\begin{equation}
	\label{eq:stabcoercivebarw}
		\bar{w} = u_h^{m+1} - u_h^m.
	\end{equation}
With this definition of $\bar{w}$ the $\theta$ scheme~\eqref{eq:ThetaSchemeFullydiscretizedPIDE} yields
	\begin{equation}
	\label{eq:stabcoercive2}
	\begin{split}
		(\bar{w},u_h^{m+\theta}) =&\ \Dt \left(-a^{m+\theta}\left(u_h^{m+\theta}, u_h^{m+\theta}\right) + \left(f^{m+\theta},u_h^{m+\theta}\right)\right)\\
		=&\ \Dt \left(-\norm{u_h^{m+\theta}}_{a^{m+\theta}}^2 + \left(f^{m+\theta},u_h^{m+\theta}\right)\right),
	\end{split}
	\end{equation}
Inserting the definition of the norm $\norm{\cdot}_{{V^\varrho_h}^\ast}$ 
and the Cauchy-Schwarz inequality yields	
	\begin{equation}
	\label{eq:stabcoercive4}
		(\bar{w},u_h^{m+\theta}) \leq \Dt \left(-\norm{u_h^{m+\theta}}_{a^{m+\theta}}^2 + \norm{f^{m+\theta}}_{{V^\varrho_h}^\ast} \norm{u_h^{m+\theta}}_{V^s}\right).
	\end{equation}	
From the definition of $\bar{w}$ in \eqref{eq:stabcoercivebarw} we get
	$u_h^{m+\theta} = \left(u_h^m + u_h^{m+1}\right)/2 + \left(\theta - \frac{1}{2}\right)\bar{w}$, respectively
	\begin{equation}
	\label{eq:stabcoercive1}
		u_h^{m+1} + u_h^m = 2u_h^{m+\theta} - (2\theta -1)\bar{w}.
	\end{equation}
Inserting the definition of $\bar{w}$ and \eqref{eq:stabcoercive1} we see that
	\begin{equation*}
		\norm{u_h^{m+1}}_H^2 - \norm{u_h^{m}}_H^2 = (u_h^{m+1} - u_h^m, u_h^{m+1} + u_h^m) = \left(\bar{w}, 2u_h^{m+\theta} - (2\theta -1)\bar{w}\right),
	\end{equation*}
such that by changing signs we arrive at
	\begin{equation}
	\label{eq:stabcoercive5}
		\norm{u_h^m}_H^2 - \norm{u_h^{m+1}}_H^2 = -2(\bar{w}, u_h^{m+\theta}) + (2\theta -1)(\bar{w} ,\bar{w}).
	\end{equation}
Invoking the upper bound of \eqref{eq:stabcoercive4} in the first summand yields
	\begin{equation}
	\label{eq:stabcoercive6}
	\begin{split}	
		-2(\bar{w}, u_h^{m+\theta})\ +&\ (2\theta -1)(\bar{w} ,\bar{w})\\
		\geq&\ 2\Dt \left(\norm{u_h^{m+\theta}}_{a^{m+\theta}}^2 - \norm{f^{m+\theta}}_{{V^\varrho_h}^\ast} \norm{u_h^{m+\theta}}_{V^\varrho}\right) + (2\theta - 1)\norm{\bar{w}}_H^2.
	\end{split}
	\end{equation}
Combining \eqref{eq:stabcoercive5} and \eqref{eq:stabcoercive6} we obtain
	\begin{equation}
	\label{eq:stabcoercive7}
	\begin{split}
		\norm{u_h^m}_H^2\ -&\ \norm{u_h^{m+1}}_H^2\\
		\geq&\ 2\Dt \left(\norm{u_h^{m+\theta}}_{a^{m+\theta}}^2 - \norm{f^{m+\theta}}_{{V^\varrho_h}^\ast} \norm{u_h^{m+\theta}}_{V^\varrho}\right) + (2\theta - 1)\norm{\bar{w}}_H^2.
	\end{split}
	\end{equation}
Now we have collected all prerequisites for analyzing $X^m$ of \eqref{eq:stabcoerciveXm}, and we deduce
	\begin{equation}
	\label{eq:stabcoercive8}
	\begin{split}
		X^m =&\ \norm{u_h^m}_H^2 - \norm{u_h^{m+1}}_H^2 - \Dt\, C_1\norm{u_h^{m+\theta}}_{a^{m+\theta}}^2 + \Dt\, C_2 \norm{f^{m+\theta}}^2_{{V^\varrho_h}^\ast}\\
		\geq&\ 2\Dt\left( \norm{u_h^{m+\theta}}_{a^{m+\theta}}^2 - \norm{f^{m+\theta}}_{{V^\varrho_h}^\ast} \norm{u_h^{m+\theta}}_{V^\varrho}\right) + (2\theta - 1)\norm{\bar{w}}_H^2\\
		&\ \quad\quad\quad -\Dt\, C_1\norm{u_h^{m+\theta}}_{a^{m+\theta}}^2 + \Dt\, C_2 \norm{f^{m+\theta}}^2_{{V^\varrho_h}^\ast}.
	\end{split}
	\end{equation}
Collecting terms gives
	\begin{equation}
	\label{eq:stabcoercive9}
	\begin{split}
	X^m
		 \geq&\ 
		(2\theta - 1)\norm{\bar{w}}_H^2\\
		&\ + \Dt\left[(2-C_1)\norm{u_h^{m+\theta}}_{a^{m+\theta}}^2 + C_2\norm{f^{m+\theta}}^2_{{V^\varrho_h}^\ast}  - 2 \norm{f^{m+\theta}}_{{V^\varrho_h}^\ast} \norm{u_h^{m+\theta}}_{V^\varrho}\right].
	\end{split}
	\end{equation}
By assumption, the bilinear form $a_t(\cdot,\cdot)$ is uniformly coercive with coercivity constant $\beta$, so
	\begin{equation}
	\label{eq:stabcoercive10}
		\norm{u_h^{m+\theta}}_{a^{m+\theta}}^2 = a^{m+\theta}\left(u_h^{m+\theta}, u_h^{m+\theta}\right) \geq \beta\norm{u_h^{m+\theta}}^2_{V^\varrho}.
	\end{equation}
Employing the assumption $C_1<2$ and inserting \eqref{eq:stabcoercive10} into \eqref{eq:stabcoercive9} gives
	\begin{equation}
	\label{eq:stabcoercive11}
	\begin{split}
X^m \geq&\ (2\theta - 1)\norm{\bar{w}}_H^2\\
		&\ + \Dt\left[(2-C_1)\beta\norm{u_h^{m+\theta}}^2_{V^\varrho} + C_2\norm{f^{m+\theta}}^2_{{V^\varrho_h}^\ast}  - 2 \norm{f^{m+\theta}}_{{V^\varrho_h}^\ast} \norm{u_h^{m+\theta}}_{V^\varrho}\right].
	\end{split}
	\end{equation}

Next, we distinguish two cases for $\theta\in[0,1]$.

First, assume $\theta\in {[}\frac{1}{2},1{]}$.
In this case, proceeding from \eqref{eq:stabcoercive11} the second binomial formula yields
	\begin{equation}
	\label{eq:stabcoercive12}
	\begin{split}
		X^m \geq&\ (2\theta - 1)\norm{\bar{w}}_H^2 + \Dt\Bigg[\left(\sqrt{(2-C_1)\beta}\norm{u_h^{m+\theta}}_{V^\varrho} - \sqrt{C_2} \norm{f^{m+\theta}}_{{V^\varrho_h}^\ast} \right)^2 \\
		&\ \quad + 2\left(\sqrt{(2-C_1)\beta C_2} -1\right)\norm{f^{m+\theta}}_{{V^\varrho_h}^\ast} \norm{u_h^{m+\theta}}_{V^\varrho}\Bigg].
	\end{split}
	\end{equation}
Now,
	\begin{equation}
	\label{eq:stabcoercive13}
		(2-C_1)\beta C_2 \geq 1 \Leftrightarrow C_2\geq \frac{1}{\beta(2-C_1)},
	\end{equation}
which is true by assumption. By the choice of $\theta$, $(2\theta - 1)\norm{\bar{w}}_H^2 \geq 0$, and by \eqref{eq:stabcoercive13} all other summands in \eqref{eq:stabcoercive12} are nonnegative as well, so $X^m \geq 0$, 
which proves the claim of the Proposition for $\theta \in \left[\frac{1}{2},1\right]$.

Second, assume $\theta\in {[}0,\frac{1}{2}{)}$. 
Here, $(2\theta - 1)<0$, which prohibits arguing like above.
By $\theta$-Scheme~\eqref{eq:ThetaSchemeFullydiscretizedPIDE}, we have
	\begin{equation}
	\label{eq:stabcoercive16}
		(\bar{w},v_h) = \Dt\left(-a^{m+\theta}\left(u_h^{m+\theta}, v_h\right) + \left(f^{m+\theta}, v_h\right)\right),
	\end{equation}	
for all $v_h\in V^\varrho_h$. Consequently,
	\begin{equation}
	\begin{split}
		\norm{\bar{w}}_{{V^\varrho_h}^\ast}  =&\ \sup\limits_{v_h\in V_h} \frac{(\bar{w},v_h)}{\norm{v_h}_{V^\varrho}}
		=\ \sup\limits_{v_h\in V_h} \frac{\Dt\left(-a^{m+\theta}\left(u_h^{m+\theta}, v_h\right) + \left(f^{m+\theta}, v_h\right)\right)}{\norm{v_h}_{V^\varrho}},
	\end{split}
	\end{equation}
which gives 
	\begin{equation}
	\label{eq:stabcoercive19}
	\begin{split}
		\norm{\bar{w}}_{{V^\varrho_h}^\ast} \leq&\ \Dt\left(\sup\limits_{v_h\in V_h} \frac{-a^{m+\theta}\left(u_h^{m+\theta}, v_h\right)}{\norm{v_h}_{V^\varrho}} + \sup\limits_{v_h\in V_h}\frac{(f^{m+\theta}, v_h)}{\norm{v_h}_{V^\varrho}} \right)\\
		=&\ \Dt \left( \norm{a^{m+\theta}\left(u_h^{m+\theta},\cdot\right)}_{{V^\varrho_h}^\ast}  + \norm{f^{m+\theta}}_{{V^\varrho_h}^\ast} \right).
	\end{split}
	\end{equation}	
Clearly, by taking the uniform continuity of $a_t(\cdot,\cdot)$ with respect to $\norm{\cdot}_{V^\varrho}$ into account we deduce
	\begin{equation}
	\label{eq:stabcoercive18}
	\begin{split}
		\norm{a^{m+\theta}\left(u_h^{m+\theta}, \cdot\right)}_{{V^\varrho_h}^\ast}  =&\ \sup\limits_{v_h\in V_h} \frac{a^{m+\theta}\left(u_h^{m+\theta}, v_h\right)}{\norm{v_h}_{V^\varrho}}\\
		\leq&\ \norm{u_h^{m+\theta}}_{V^\varrho} \sup\limits_{v_h\in V_h} \frac{\alpha\norm{v_h}_{V^\varrho}}{\norm{v_h}_{V^\varrho}} = \alpha \norm{u_h^{m+\theta}}_{V^\varrho},
	\end{split}
	\end{equation}
which we insert into \eqref{eq:stabcoercive19} to get
	\begin{equation}
	\label{eq:stabcoercive20}
	\begin{split}		
		\norm{\bar{w}}_{{V^\varrho_h}^\ast} \leq&\ \Dt \left(\alpha\norm{u_h^{m+\theta}}_{V^\varrho} + \norm{f^{m+\theta}}_{{V^\varrho_h}^\ast} \right).
	\end{split}
	\end{equation}
By the definition of $\Lambda$ in \eqref{eq:defLambda} we have the \glqq inverse estimate\grqq
	\begin{equation}
	\label{eq:stabcoercive21}
		\norm{\bar{w}}_H \leq \sqrt{\Lambda}\norm{\bar{w}}_{{V^\varrho_h}^\ast}.
	\end{equation}
	Notice that therefore, we do not require the inverse property, Assumption \eqref{A4}.
Assembling our results by combining \eqref{eq:stabcoercive20} with \eqref{eq:stabcoercive21} gives
	\begin{equation}
	\label{eq:stabcoercive23}
		\norm{\bar{w}}_H \leq \sqrt{\Lambda}\norm{\bar{w}}_{{V^\varrho_h}^\ast} \leq \sqrt{\Lambda} \Dt \left( \alpha \norm{u_h^{m+\theta}}_{V^\varrho} + \norm{f^{m+\theta}}_{{V^\varrho_h}^\ast} \right).
	\end{equation}
Altogether we obtain
	\begin{align}
		X^m	
		\geq&\ (2\theta - 1)\left(\sqrt{\Lambda} \Dt \left( \alpha \norm{u_h^{m+\theta}}_{V^\varrho} + \norm{f^{m+\theta}}_{{V^\varrho_h}^\ast} \right)\right)^2\label{eq:stabcoercive24}\\
		& + \Dt\left[(2-C_1)\beta\norm{u_h^{m+\theta}}^2_{V^\varrho} + C_2\norm{f^{m+\theta}}^2_{{V^\varrho_h}^\ast}  - 2 \norm{f^{m+\theta}}_{{V^\varrho_h}^\ast} \norm{u_h^{m+\theta}}_{V^\varrho}\right].\notag
	\end{align}
Expanding the squared brackets and collecting terms we obtain 
	\begin{equation}
	\label{eq:stabcoercive26}
	\begin{split}
X^m \geq&\ \Dt\Bigg[ \left[(2-C_1)\beta - (1-2\theta)\Dt\Lambda\alpha^2\right]\norm{u_h^{m+\theta}}_{V^\varrho}^2\\
		&\ \quad - 2\left[1+ (1-2\theta)\Dt\alpha\Lambda\right]\norm{f^{m+\theta}}_{{V^\varrho_h}^\ast} \norm{u_h^{m+\theta}}_{V^\varrho}\\
		&\ \quad + \left[C_2 - (1-2\theta)\Dt \Lambda\right]\norm{f^{m+\theta}}_{{V^\varrho_h}^\ast} ^2 \Bigg].
	\end{split}
	\end{equation}		
Recall that by definition in \eqref{eq:stabcoercivemu}, $\mu=\left(1-2\theta\right)\Lambda\Dt$, 
which turns \eqref{eq:stabcoercive26} into
	\begin{equation}
	\label{eq:stabcoercive26.2}
	\begin{split}
X^m \geq&\ \Dt\Bigg[ \left[(2-C_1)\beta - \mu\alpha^2\right]\norm{u_h^{m+\theta}}_{V^\varrho}^2
		- 2\left[1+ \mu\alpha\right]\norm{f^{m+\theta}}_{{V^\varrho_h}^\ast} \norm{u_h^{m+\theta}}_{V^\varrho}
		\\
		&\ \quad 
		+ \left[C_2 - \mu\right]\norm{f^{m+\theta}}_{{V^\varrho_h}^\ast} ^2 \Bigg].
	\end{split}
	\end{equation}
Define constants
	\begin{align}
		\gamma =&\ 1 + \mu\alpha,\label{eq:stabcoercivegamma}\\
		\delta =&\ C_2 - \mu,\label{eq:stabcoercivedelta}\\
		\kappa =&\ (2-C_1)\beta - \mu\alpha^2.\label{eq:stabcoercivekappa}
	\end{align}
Trivially, $\gamma>0$. We also have $\delta\geq0$, since $C_2\geq \mu$ by the first condition for $C_2$ in \eqref{eq:stabcoerciveC2}. Furthermore, we have $\kappa > 0$, since $C_1 < 2-\frac{\mu\alpha^2}{\beta}$
by the upper bound for the open interval of possible values for $C_1$ according to \eqref{eq:stabcoerciveC2}. Inserting the definitions of the nonnegative $\delta$ and the positive $\gamma$ and $\kappa$ into \eqref{eq:stabcoercive26.2} and applying the second binomial formula gives
	\begin{equation}
	\label{eq:stabcoercive27}
	\begin{split}
X^m \geq \Dt \Bigg[\Bigg(\sqrt{\kappa} \norm{u_h^{m+\theta}}_{V^\varrho} -&\ \sqrt{\delta}\norm{f^{m+\theta}}_{{V^\varrho_h}^\ast} \Bigg)^2
		\quad + 2\left(\sqrt{\kappa\delta} - \gamma\right)\norm{f^{m+\theta}}_{{V^\varrho_h}^\ast} \norm{u_h^{m+\theta}}_{V^\varrho}\bigg].
	\end{split}
	\end{equation}
This lower bound for $X^m$ 
is thus nonnegative, if $\sqrt{\kappa\delta} \geq \gamma$,
which, by the nonnegativity of the constants involved, is equivalent to $\kappa\delta \geq \gamma^2$.
The latter holds, if
	\begin{equation*}
			\left((2-C_1)\beta - \mu\alpha^2\right)(C_2-\mu) \geq (1+\mu\alpha)^2.
	\end{equation*}
This is indeed is the case, since $C_2 \geq \frac{(1+\mu\alpha)^2}{(2-C_1)\beta - \mu\alpha^2} + \mu,$
by the second condition for $C_2$ in \eqref{eq:stabcoerciveC2}. Therefore, $X^m\geq 0$ which finishes the proof.
\end{proof}

\section{Convergence Analysis: Proofs}\label{sec-proofsConvCoercive}

\subsection{Derivation of the $\theta$-scheme for the residuals $\xi_h^m$}\label{sec-theta_residual}

\begin{proof}[of Lemma \ref{lem:residual}]
By admitting time dependence of the bilinear form, this lemma generalizes Lemma 5.1~in~\cite{PetersdorffSchwab2003}. The proof therein provides very reliable guidelines along which we now derive our result, as well.

Choose $v_h\in V^\varrho_h\subset V^\varrho$ arbitrary but fix and $m\in\{0,\dots,M-1\}$ and compute
	\begin{align}
		\left(\frac{\xi_h^{m+1} - \xi_h^m}{\Dt}, v_h\right)&\ + a^{m+\theta}\left(\theta \xi_h^{m+1} + (1-\theta) \xi_h^m,v_h\right)\notag \\
		=&\ \bigg(\frac{(P_h u^{m+1} - u^{m+1}_h) - (P_h u^m - u^m_h)}{\Dt}, v_h\bigg)\notag\\
		&\qquad\qquad + a^{m+\theta}\left(\theta \left(P_h u^{m+1} - u_h^{m+1}\right) + (1-\theta) \left(P_h u^m - u_h^m\right),v_h\right)\notag 
	\end{align}
Rearranging and invoking the relation provided by the $\theta$-Scheme \eqref{eq:ThetaSchemeFullydiscretizedPIDE} 
yields
	\begin{align}
		&\left(\frac{P_h u^{m+1} - P_h u^m}{\Dt} , v_h\right) + a^{m+\theta}(P_h u^{m+\theta},v_h)  - \left(\left( \frac{u^{m+1}_h - u^m_h}{\Dt}, v_h\right) + a^{m+\theta}(u_h^{m+\theta},v_h) \right)\notag\\
		&\qquad=\ \left(\frac{P_h u^{m+1} - P_h u^m}{\Dt} - \dot{u}^{m+\theta} , v_h\right) + a^{m+\theta}(P_h u^{m+\theta},v_h)
		+(\dot{u}^{m+\theta},v_h) - (f^{m+\theta},v_h)\label{eq:L51help02}.
	\end{align}
By Equation~\eqref{def-para} and the assumption that $u\in C^1([0,T],H)$ and that the bilinear form $a$ is continuous in $t$, the fundamental theorem of variational calculus implies that
	\begin{equation}
	\label{eq:L51help}
		(\dot{u}^{m+\theta},v) + a^{m+\theta}\left(u^{m+\theta},v\right) = (f^{m+\theta},v),\qquad\forall v\in V^\varrho.
	\end{equation}
Recalling that $v_h\in V^\varrho_h\subset V^\varrho$ we combine \eqref{eq:L51help}  and \eqref{eq:L51help02} to get
	\begin{align}
		&\left(\frac{P_h u^{m+1} - P_h u^m}{\Dt} - \dot{u}^{m+\theta} , v_h\right) + a^{m+\theta}(P_h u^{m+\theta},v_h) +(\dot{u}^{m+\theta},v_h) - (f^{m+\theta},v_h)\notag\\
		&\quad=\ \left(\frac{P_h u^{m+1} - P_h u^m}{\Dt} - \dot{u}^{m+\theta} , v_h\right) + a^{m+\theta}(P_h u^{m+\theta},v_h) - a^{m+\theta}(u^{m+\theta},v_h)\label{eq:L51help03}
	\end{align}
Adding an artificial zero to \eqref{eq:L51help03} we arrive at 	
	\begin{align*}
		&\left(\frac{P_h u^{m+1} - P_h u^m}{\Dt} - \dot{u}^{m+\theta} , v_h\right) + a^{m+\theta}(P_h u^{m+\theta},v_h) - a^{m+\theta}(u^{m+\theta},v_h)\\
		&\qquad=\ \left(\frac{P_h u^{m+1} - P_h u^m}{\Dt} - \frac{u^{m+1} - u^m}{\Dt},v_h\right) + \left(\frac{u^{m+1} - u^m}{\Dt} - \dot{u}^{m+\theta} , v_h\right)\\
		&\qquad\qquad + a^{m+\theta}(P_h u^{m+\theta} - u^{m+\theta},v_h),\\
		&\qquad=\ (r^m,v_h),
	\end{align*}
with
	\begin{equation*}
		(r^m,v_h) := (r_1^m + r_2^m + r_3^m,v_h),
	\end{equation*}
wherein
	\begin{align*}
		(r_1^m,\cdot) =&\ \left(\frac{u^{m+1} - u^m}{\Dt} - \dot{u}^{m+\theta} , \cdot\right),\\
		(r_2^m,\cdot) =&\ \left(\frac{P_h u^{m+1} - P_h u^m}{\Dt} - \frac{u^{m+1} - u^m}{\Dt}, \cdot\right),\\
		(r_3^m,\cdot) =&\ a^{m+\theta}(P_h u^{m+\theta} - u^{m+\theta},\cdot),
	\end{align*}
which validates the decomposition of $r^m$ claimed by the lemma.
\end{proof}

\subsection{Derivation of the Upper Bounds of the Residuals}
\label{sec-proofUpperBoundsforresiduals}.

\begin{proof}[of Lemma \ref{lem:UpperBoundsforresiduals}]
The proof follows along the same guidelines as the derivation of the upper bounds of the residuals in \cite{PetersdorffSchwab2003}.
Choose $v_h\in V^\varrho_h$ arbitrary but fix. We derive each upper bound individually.
First, we derive an upper bound for $\norm{r_1^m}_{{V^\varrho_h}^\ast}$.
Clearly,
	\begin{equation}
	\label{eq:r1estimatestart}
		\left|(r_1^m,v_h)\right| \leq \norm{\frac{u^{m+1}-u^m}{\Dt} - \dot{u}^{m+\theta}}_{{V^\varrho_h}^\ast} \norm{v_h}_{V^\varrho},
	\end{equation}		
by the definition of the norm $\norm{\cdot}_{{V^\varrho_h}^\ast}$. Recall that our time grid is equidistantly spaced so $t^{m+1} = t^m + \Dt$ for all $m\in\{0,\dots,M-1\}$. Under the assumption that $u\in C^2\left([0,T],H\right)$ we represent $u^{m+1}=u(t^{m+1})$ by the Taylor expansion of $u$ around $t^m$, evaluated at $t^{m+1}$. Thus, by applying 
the Taylor theorem for Banach-valued functions, we have
	\begin{equation}
		u^{m+1} = u^m + \dot{u}^m \Dt + \int_{t^m}^{t^{m+1}}(t^{m+1} - \tau)\ddot{u}(\tau) \d{\tau}.
	\end{equation}
Proceeding with elementary calculations we get
	\begin{align}
		\frac{u^{m+1} - u^m}{\Dt} -&\ \dot{u}^{m+\theta}
		=&\ \frac{1}{\Dt}\int_{t^m}^{t^{m+1}}(t^{m+1}-\tau)\ddot{u}(\tau) \d{\tau} - \left(\theta \dot{u}^{m+1} - \theta\dot{u}^m\right).\label{eq:r1mineq1}
	\end{align}
Since $u\in C^2([0,T], H)$, the absolute continuity of continuously differentiable functions, compare \cite[Chapter IV]{Elstrodt}, together with Proposition 1.2.3 in \cite{ArendtBattyHieberNeubrander2011} grant that $\ddot{u}$ is Bochner integrable and
	\begin{equation}
	\label{eq:r1mineq2}
		\theta \dot{u}^{m+1} - \theta\dot{u}^m = \theta\int_{t^m}^{t^{m+1}} \ddot{u}(\tau)\d{\tau}.
	\end{equation}
Inserting \eqref{eq:r1mineq2} into \eqref{eq:r1mineq1} yields
	\begin{equation}
	\label{eq:r1mineq2.1}
	\begin{split}
		\frac{u^{m+1} - u^m}{\Dt} - \dot{u}^{m+\theta} 
		=&\ -\frac{1}{\Dt}\int_{t^m}^{t^{m+1}}\left(\tau - (1-\theta)t^{m+1} - \theta t^m\right)\ddot{u}(\tau) \d{\tau}.
	\end{split}
	\end{equation}
Since $\ddot{u}$ is Bochner integrable, we can apply 
the results from \cite[Section 24.2]{Wloka-english} to get
	\begin{align}
		\norm{\frac{u^{m+1} - u^m}{\Dt} - \dot{u}^{m+\theta}}_{{V^\varrho_h}^\ast}
		 &= \norm{\frac{1}{\Dt}\int_{t^m}^{t^{m+1}}\left(\tau - (1-\theta)t^{m+1} - \theta t^m\right)\ddot{u}(\tau) \d{\tau}}_{{V^\varrho_h}^\ast}\notag\\
		& \leq \frac{1}{\Dt} \int_{t^m}^{t^{m+1}}\norm{\left(\tau - (1-\theta)t^{m+1} - \theta t^m\right) \ddot{u}(\tau)}_{{V^\varrho_h}^\ast}\d{\tau}\label{eq:r1mineq3}
	\end{align}
taking the norm into the integral. Considering the function
	\begin{equation*}
		g_\theta(\tau) = \tau - (1-\theta)t^{m+1} - \theta t^m,\quad g_\theta : \left[t^m,\ t^{m+1}\right] \rightarrow \mathbb{R}
	\end{equation*}
we find due to its strict monotonicity in $\tau$ that for $\tau\in [t^m, t^{m+1}]$
	\begin{equation}
	\label{eq:r1mineq3.1}
	\begin{split}
		\left|g_\theta(\tau)\right| \leq&\ \max\left\{\left|t^{m+1} - (1-\theta)t^{m+1} - \theta t^m\right|, \left|t^m - (1-\theta)t^{m+1} - \theta t^m\right|\right\}\\
		=&\ \max\{|\theta(t^{m+1} - t^m)|, |(1-\theta)(t^{m+1} - t^m)|\} \\
		=&\ \Dt\max\{\theta, (1-\theta)\} = \Dt\, C_\theta,
	\end{split}
	\end{equation}
with $C_\theta = \max\{\theta, (1-\theta)\}$. Using the estimate \eqref{eq:r1mineq3.1} we develop \eqref{eq:r1mineq3} into
	\begin{align*}
		\frac{1}{\Dt} \int_{t^m}^{t^{m+1}}\norm{\left(\tau - (1-\theta)t^{m+1} - \theta t^m\right) \ddot{u}(\tau)}_{{V^\varrho_h}^\ast}\d{\tau}
		\leq&\ C_\theta \int_{t^m}^{t^{m+1}} \norm{\ddot{u}(\tau)}_{{V^\varrho_h}^\ast}  \d{\tau}\notag \\
		\leq&\ C_\theta \sqrt{\Dt} \left(\int_{t^m}^{t^{m+1}} \norm{\ddot{u}(\tau)}^2_{{V^\varrho_h}^\ast}  \d{\tau}\right)^\frac{1}{2},
	\end{align*}
with the H\"older inequality being applied in the last step. 

\emph{Special case $\theta = 1/2$}:
For $\theta=1/2$ and under the assumption of additional smoothness of $u$ in the sense of Assumption \ref{enum:UpperBoundsOptional} being satisfied, further computations are possible. Then by integration by parts and elementary calculations
	\begin{equation}
	\label{eq:r1mineq4}
	\begin{split}
		&\ \frac{u^{m+1} - u^m}{\Dt} - \dot{u}^{m+\theta}\\
		=&\ -\frac{1}{\Dt}\int_{t^m}^{t^{m+1}}\left(\tau - \frac{1}{2}t^{m+1} - \frac{1}{2} t^m\right)\ddot{u}(\tau) \d{\tau}\\
		=&\ -\frac{1}{2\Dt} \left(-t^m t^{m+1}\left(\ddot{u}^{m+1} - \ddot{u}^m\right) - \int_{t^m}^{t^{m+1}}\left(\tau^2 - (t^{m+1} + t^m)\tau\right)\dddot{u}(\tau) \d{\tau}\right) \\
		=&\ \frac{1}{2\Dt} \int_{t^m}^{t^{m+1}} (t^{m+1}-\tau)(t^m-\tau)\dddot{u}(\tau) \d{\tau}.
	\end{split}
	\end{equation}
The absolute value of $\tau\mapsto (t^{m+1}-\tau)(t^m-\tau)$ with $\tau\in \left[t^m, t^{m+1}\right]$ is bounded,
	\begin{equation}
	\label{eq:r1mineq5}
		\left|(t^{m+1}-\tau)(t^m-\tau)\right| \leq \frac{1}{4}\Dt ^2,\qquad\tau\in [t^m, t^{m+1}].
	\end{equation}
We take the norm $\norm{\cdot}_{{V^\varrho_h}^\ast}$ of the result of \eqref{eq:r1mineq4}, use the Bochner integrability of $\dddot{u}$ guaranteed by Proposition 1.2.3 in \cite{ArendtBattyHieberNeubrander2011}, apply again 
the results from \cite[Section 24.2]{Wloka-english} and then get by invoking estimate \eqref{eq:r1mineq5} that
	\begin{align}
		\norm{\frac{1}{\Dt} \left(u^{m+1} - u^m\right) - \dot{u}^{m+\theta}}_{{V^\varrho_h}^\ast}  =&\ \frac{1}{2\Dt} \norm{\int_{t^m}^{t^{m+1}} \left(t^{m+1} - \tau\right)\left(t^m - \tau\right)\dddot{u}(\tau) \d{\tau}}_{{V^\varrho_h}^\ast}\notag\\
		\leq&\ \frac{1}{2\Dt} \int_{t^m}^{t^{m+1}} \frac{1}{4}\Dt^2 \norm{\dddot{u}(\tau)}_{{V^\varrho_h}^\ast}  \d{\tau}\notag \\
		\leq &\ \frac{1}{8}\Dt^\frac{3}{2} \left(\int_{t^m}^{t^{m+1}} \norm{\dddot{u}(\tau)}^2_{{V^\varrho_h}^\ast}  \d{\tau} \right)^\frac{1}{2},\label{eq:r1mineq6}
	\end{align}
with the H\"older inequality yielding the last step. Setting
	\begin{equation*}
		C_{r_1} = \max\left\{C_\theta,\frac{1}{8}\right\} = C_\theta\in \left[\frac{1}{2},1\right]
	\end{equation*}
defines the constant in \eqref{eq:UpperBoundsforresidualsr1} and completes the estimation of $r_1^m$.	

Second, we derive an upper bound for $\norm{r_2^m}_{{V^\varrho_h}^\ast}$.
With assumption~\ref{enum:uC2} we have again by combining absolute continuity of continuously differentiable functions with Proposition~1.2.3 in \cite{ArendtBattyHieberNeubrander2011} that $\dot{u}$ is Bochner integrable and
	\begin{equation}
	\label{eq:r2mineq1}
		u^{m+1} - u^m = \int_{t^m}^{t^{m+1}} \dot{u}(\tau)\d{\tau}.
	\end{equation}
We begin estimating the norm of $r_2^m$ by using \eqref{eq:r2mineq1} and obtain
	\begin{equation}
	\label{eq:r2mineq2}
		\begin{split}
			|(r_2^m, v_h)| \leq &\ \frac{1}{\Dt} \norm{\left(u^{m+1} - u^m\right) - P_h\left(u^{m+1} - u^m\right)}_{{V^\varrho_h}^\ast}  \norm{v_h}_{V^\varrho}\\
				=&\ \frac{1}{\Dt} \norm{(I-P_h)\left(\int_{t^m}^{t^{m+1}} \dot{u}(\tau)\d{\tau}\right)}_{{V^\varrho_h}^\ast}  \norm{v_h}_{V^\varrho},
		\end{split}
	\end{equation}
where $I$ denotes the identity mapping. By Proposition~1.1.6 in \cite{ArendtBattyHieberNeubrander2011} we may interchange integration with the $(I-P_h)$ operator to get
	\begin{equation}
	\label{eq:r2mineq3}
		(I-P_h)\left(\int_{t^m}^{t^{m+1}} \dot{u}(\tau)\d{\tau}\right) = \int_{t^m}^{t^{m+1}}(I-P_h)\left(\dot{u}(\tau)\right)\d{\tau}.
	\end{equation}
Proposition~1.1.6 in \cite{ArendtBattyHieberNeubrander2011} also grants that with $\dot{u}$ being Bochner integrable, $\left(I-P_h\right)(\dot{u})$ is Bochner integrable, as well. Consequently, we may combine \eqref{eq:r2mineq2} and \eqref{eq:r2mineq3} and conclude again by 
the results from \cite[Section 24.2]{Wloka-english} that 
	\begin{equation}
	\label{eq:r2mineq4}
		\begin{split}
			|(r_2^m, v_h)| \leq &\ \frac{1}{\Dt} \norm{(I-P_h)\left(\int_{t^m}^{t^{m+1}} \dot{u}(\tau)\d{\tau}\right)}_{{V^\varrho_h}^\ast}  \norm{v_h}_{V^\varrho}\\
			=&\ \frac{1}{\Dt} \norm{\int_{t^m}^{t^{m+1}}(I-P_h)\left(\dot{u}(\tau)\right)\d{\tau}}_{{V^\varrho_h}^\ast}  \norm{v_h}_{V^\varrho}\\
			\leq&\ \frac{1}{\Dt} \int_{t^m}^{t^{m+1}} \norm{(I-P_h) \dot{u}(\tau)}_{{V^\varrho_h}^\ast} \d{\tau} \norm{v_h}_{V^\varrho}.
		\end{split}
	\end{equation}
At this point we want to apply the approximation property of the projector $P_h$ outlined in Assumption~\ref{ass:GeneralApproximationProperty}. Before we can do that we need to establish a relation between $\norm{\cdot}_{{V^\varrho_h}^\ast}$ $\norm{\cdot}_{{V^\varrho_h}^\ast}$ and $\norm{\cdot}_{V^\varrho}$. To do so, keep $\tau\in \left[t^m,t^{m+1}\right]$ arbitrary but fix. Using the definition of norm $\norm{\cdot}_{{V^\varrho_h}^\ast}$ we derive
	\begin{equation}
	\label{eq:r2mineq5}
	\begin{split}
		 \norm{(I-P_h) \dot{u}(\tau)}_{{V^\varrho_h}^\ast} =&\ \sup\limits_{v_h\in V_h} \frac{\left((I-P_h)\dot{u}(\tau),v_h\right)}{\norm{v_h}_{V^\varrho}} \\
		 \leq&\ \sup\limits_{v_h\in V_h} \frac{\norm{(I-P_h)\dot{u}(\tau)}_H\norm{v_h}_H}{\norm{v_h}_{V^\varrho}} \\
 		 =&\ \norm{(I-P_h)\dot{u}(\tau)}_H \sup\limits_{v_h\in V_h} \frac{\norm{v_h}_H}{\norm{v_h}_{V^\varrho}} \\
 		 \leq&\ \norm{(I-P_h)\dot{u}(\tau)}_{V^\varrho},
	\end{split}
	\end{equation}
since $\norm{v}_H\leq\norm{v}_{V^\varrho}$ for all $v\in V^\varrho$.
Inserting \eqref{eq:r2mineq5} into \eqref{eq:r2mineq4} and applying the approximation property of $P_h$ pointwise in time we derive
	\begin{equation}
	\begin{split}
		|(r_2^m, v_h)| \leq &\ \frac{1}{\Dt} \int_{t^m}^{t^{m+1}} \norm{(I-P_h) \dot{u}(\tau)}_{{V^\varrho_h}^\ast} \d{\tau} \norm{v_h}_{V^\varrho}\\
		\leq &\ \frac{1}{\Dt} \int_{t^m}^{t^{m+1}} \norm{(I-P_h)\dot{u}(\tau)}_{V^\varrho} \d{\tau} \norm{v_h}_{V^\varrho}\\
		\leq &\ C_\Upsilon\,\frac{1}{\Dt} \int_{t^m}^{t^{m+1}} \Upsilon\left(h, t, \orderAhalf, \dot{u}(\tau)\right)\d{\tau}\,\norm{v_h}_{V^\varrho}\\
		\leq &\ C_{r_2}\,\frac{1}{\sqrt{\Dt}} \left(\int_{t^m}^{t^{m+1}} \Upsilon^2\left(h, \orderAhalf, t, \dot{u}(\tau)\right)\d{\tau}\right)^\frac{1}{2}\norm{v_h}_{V^\varrho},
	\end{split}
	\end{equation}
where the H\"older inequality grants the last step and where we used the additional smoothness in the sense of assumption~\ref{enum:uinWVt} and where $C_{r_2}=C_\Upsilon$.

Third, we derive and upper bound for $\norm{r_3^m}_{{V^\varrho_h}^\ast}$.
The bound for the norm of $r_3^m$ is a direct consequence of the uniform continuity of $a_t(\cdot, \cdot)$. We compute for $v_h\in V^\varrho_h$ that
\begin{align*}
	\left|\left(r_3^m,v_h\right)\right| =&\ \left|a^{m+\theta}(P_h u^{m+\theta} - u^{m+\theta},v_h)\right| \\
	\leq&\ \alpha \norm{P_h u^{m+\theta} - u^{m+\theta}}_{V^\varrho}\norm{v_h}_{V^\varrho}\\
	\leq&\ C_{r_3}\,\Upsilon(h, \orderAhalf, t, u^{m+\theta})\norm{v_h}_{V^\varrho},
\end{align*}
wherein $C_{r_3} = \alpha C_\Upsilon$, with $\alpha$ the continuity constant of $a_t(\cdot,\cdot)$ and $C_\Upsilon$ the constant stemming from the approximation property \eqref{eq:GeneralApproximationProperty} of Assumption \ref{ass:GeneralApproximationProperty}.
This finishes the derivation of upper bounds for the norms of the individual residuals $r_1^m,\ r_2^m$ and $r_3^m$, $m=0,\dots,M-1$.
\end{proof}

\subsection{Proof of the Main Convergence Result}\label{sec:proofthrm:ConvergenceCoercive}
\begin{proof}[of Theorem \ref{thrm:ConvergenceCoercive}]
For $m\in\{0,\dots,M\}$ recall the definition
	\begin{equation*}
		e_h^m = u^m - u_h^m= \eta^m + \xi_h^m
	\end{equation*}
with 
	\begin{align}
		\eta^m =&\ u^m - P_h u^m,\qquad \forall m\in\{0,\dots, M\}, \label{eq:defetamRep}\\
		\xi_h^m =&\ P_h u^m - u_h^m,\qquad \forall m\in\{0,\dots, M\},\label{eq:defximRep}
	\end{align}
as introduced in \eqref{eq:defehm}. Additionally, we denote
	\begin{align}
		\eta^{m+\theta} =&\ \theta\eta^{m+1}+(1-\theta)u^m = u^{m+\theta} - P_h u^{m+\theta},\qquad \forall m\in\{0,\dots, M-1\}, \label{eq:defetamtheta}\\
		\xi_h^{m+\theta} =&\ \theta\xi_h^{m+1} + (1-\theta)\xi_h^m = P_h u^{m+\theta} - u_h^{m+\theta},\qquad \forall m\in\{0,\dots, M-1\}. \label{eq:defximtheta}
	\end{align}

By the third binomial formula we get	
	\begin{align}
		\norm{u^M - u_h^M}_H^2 +&\ \Dt\sum_{m=0}^{M-1} \norm{u^{m+\theta} - u_h^{m+\theta}}_{a^{m+\theta}}^2\notag\\
		=&\ \norm{e_h^M}_H^2 + \Dt \sum_{m=0}^{M-1}  \norm{e_h^{m+\theta}}_{a^{m+\theta}}^2\notag\\
		=&\ \norm{\eta^M + \xi_h^M}_H^2 + \Dt \sum_{m=0}^{M-1}  \norm{\eta^{m+\theta} + \xi_h^{m+\theta}}_{a^{m+\theta}}^2\notag\\
		\leq&\ 2\left(\norm{u^M - P_h u^M}_H^2 + \Dt \sum_{m=0}^{M-1} \norm{u^{m+\theta} - P_h u^{m+\theta}}_{a^{m+\theta}}^2 \right)\label{eq:thrmconvcoerc1.1}\\
		&\ \qquad + 2\left(\norm{\xi_h^M}_H^2 + \Dt \sum_{m=0}^{M-1} \norm{\xi_h^{m+\theta}}_{a^{m+\theta}}^2\right).\label{eq:thrmconvcoerc1.2}
	\end{align}
Considering the first main summand, that is \eqref{eq:thrmconvcoerc1.1}, we simply exploit the continuity of $a_t(\cdot,\cdot)$ to get
	\begin{equation}
	\label{eq:thrmconvcoerc2}
	\begin{split}
		\norm{u^M - P_h u^M}_H^2 +&\ \Dt \sum_{m=0}^{M-1} \norm{u^{m+\theta} - P_h u^{m+\theta}}_{a^{m+\theta}}^2\\
		\leq&\ \norm{u^M - P_h u^M}_{V^\varrho}^2 + \alpha\frac{T}{M} \sum_{m=0}^{M-1} \norm{u^{m+\theta} - P_h u^{m+\theta}}_{V^\varrho}^2.
	\end{split}
	\end{equation}
Considering the term $\sum_{m=0}^{M-1} \norm{u^{m+\theta} - P_h u^{m+\theta}}_{V^\varrho}^2$ in \eqref{eq:thrmconvcoerc2} we see by the linearity of the projector $P_h$ and elementary calculations that
	\begin{align*}
		\sum_{m=0}^{M-1}\norm{u^{m+\theta} - P_h u^{m+\theta}}_{V^\varrho}^2
		=&\ \sum_{m=0}^{M-1} \norm{\theta \left(u^{m+1} - P_h u^{m+1}\right) + (1-\theta) \left(u^m - P_h u^m\right)}_{V^\varrho}^2\notag\\
		\leq&\ 2\sum_{m=0}^{M-1}\left(\theta^2 \norm{u^{m+1} - P_h u^{m+1}}_{V^\varrho}^2 + (1-\theta)^2 \norm{u^m - P_h u^m}_{V^\varrho}^2\right).
	\end{align*}
We split the sum in the right hand side and replace the individual summands by the maximum summand yielding the estimate
	\begin{equation}
	\label{eq:thrmconvcoerc3.1}
	\begin{split}
		\sum_{m=0}^{M-1}\norm{u^{m+\theta} - P_h u^{m+\theta}}_{V^\varrho}^2
		\leq&\ 2\bigg(M\theta^2\max\limits_{0\leq \tau\leq T}\left(\norm{u(\tau) - P_h u(\tau)}_{V^\varrho}^2\right)\\
		&\qquad\quad + M(1-\theta)^2\max\limits_{0\leq \tau\leq T}\left(\norm{u(\tau) - P_h u(\tau)}_{V^\varrho}^2\right)\bigg)\\
		=&\ 2M\left(\theta^2 + (1-\theta)^2\right)\max\limits_{0\leq \tau\leq T} \left(\norm{u(\tau) - P_h u(\tau)}_{V^\varrho}^2\right)\\
		\leq&\ M\max\limits_{0\leq \tau\leq T} \left(\norm{u(\tau) - P_h u(\tau)}_{V^\varrho}^2\right).	
	\end{split}
	\end{equation}
Inserting \eqref{eq:thrmconvcoerc3.1} into \eqref{eq:thrmconvcoerc2} yields
	\begin{equation*}
	\begin{split}
		\norm{u^M - P_h u^M}_H^2 +&\ \Dt \sum_{m=0}^{M-1} \norm{u^{m+\theta} - P_h u^{m+\theta}}_{a^{m+\theta}}^2\\
		\leq&\ \norm{u^M - P_h u^M}_{V^\varrho}^2 + \alpha T\max\limits_{0\leq \tau\leq T} \left(\norm{u(\tau) - P_h u(\tau)}_{V^\varrho}^2\right)\\
		\leq&\ \left(1+ \alpha T\right)\max\limits_{0\leq \tau\leq T} \left(\norm{u(\tau) - P_h u(\tau)}_{V^\varrho}^2\right).
	\end{split}
	\end{equation*}
Finally, the approximation property of the projector of Assumption~\ref{ass:GeneralApproximationProperty} applied pointwise in time, and setting $\overline{C}_1 = C_\Upsilon^2(1+ \alpha T)$, yields
	\begin{equation}
	\label{eq:thrmconvcoerc5}
	\begin{split}
		\norm{u^M - P_h u^M}_H^2 + \Dt\sum_{m=0}^{M-1}\norm{u^{m+\theta} - P_h u^{m+\theta}}_{a^{m+\theta}}^2
		& \leq \overline{C}_1\,\max\limits_{0\leq\tau\leq T}\Upsilon^2(h, \orderAhalf, t, u(\tau)).
	\end{split}
	\end{equation}

Considering now the main summand in \eqref{eq:thrmconvcoerc1.2} we find applying Corollary~\ref{cor:stabxicoerc} using the positive constants $C_1$ and $C_2$ therein that
	\begin{equation}
	\label{eq:thrmconvcoerc6}
	\begin{split}
		\norm{\xi_h^M}_H^2 +&\ \Dt \sum_{m=0}^{M-1} \norm{\xi_h^{m+\theta}}_{a^{m+\theta}}^2\\
		\leq&\ \max\left\{1,\frac{1}{C_1}\right\}\left(\norm{\xi_h^M}_H^2 + \Dt\, C_1 \sum_{m=0}^{M-1} \norm{\xi_h^{m+\theta}}_{a^{m+\theta}}^2\right) \\
		\leq&\ \max\left\{1,\frac{1}{C_1}\right\}\left(\norm{\xi_h^0}_H^2 + \Dt\, C_2 \sum_{m=0}^{M-1} \norm{r_h^m}_{{V^\varrho_h}^\ast}^2\right).
	\end{split}
	\end{equation}

We investigate $\norm{\xi_h^0}_H^2$, first. By definition of $\xi_h^m$ for $m=0$ we get
	\begin{equation}
	\label{eq:thrmconvcoerc6.1}
		\begin{split}
			\norm{\xi_h^0}_H =&\ \norm{P_h u^0 - u_h^0}_H.
		\end{split}
	\end{equation}
Recall that
	\begin{equation}
	\label{eq:thrmconvcoerc6.2}
		u^0=u(t^0)=u(0)=g,
	\end{equation}
the initial condition of the original problem~\eqref{eq:evol} and further 
	\begin{equation}
	\label{eq:thrmconvcoerc6.3}
		u_h^0=g_h,
	\end{equation}
by the initial condition of the fully discrete $\theta$-Scheme~\eqref{eq:ThetaSchemeFullydiscretizedPIDE}. With inserting both \eqref{eq:thrmconvcoerc6.2} and \eqref{eq:thrmconvcoerc6.3} into \eqref{eq:thrmconvcoerc6.1} and exploiting approximation property of the projector $P_h$ of Assumption~\ref{ass:GeneralApproximationProperty} as well as the quasi-optimality of the initial condition as stated in Assumption~\ref{ass:quasioptinitial}, we find
	        \begin{equation}
			\norm{\xi_h^0}_H \leq\ \norm{P_h u^0 - g}_H + \norm{g- u_h^0}_H
			=\ \norm{u(0)- P_h u(0)}_H + \norm{g - g_h}_H		
			\end{equation}
Inserting $ \norm{g - g_h}_H\leq C_I\inf_{v_h\in V^\varrho_h}\norm{g - v_h}_H$, and $\norm{g - v_h}_H = \norm{u(0) - v_h}_H$ we obtain
			\begin{equation}
			\label{eq:thrmconvcoerc6.4}
			\begin{split}
			\norm{\xi_h^0}_H
			\leq&\ \max\limits_{0\leq\tau\leq T}\left(\norm{u(\tau) - P_h u(\tau)}_H + C_I\inf_{v_h\in V^\varrho_h}\norm{u(\tau) - v_h}_H\right)\\
			\leq&\ \max\limits_{0\leq\tau\leq T}\,C_\Upsilon(1 + C_I)\Upsilon(h, \orderAhalf, t, u(\tau))\\
			=&\ \max\limits_{0\leq\tau\leq T}\,\sqrt{\overline{C}_2}\Upsilon(h, \orderAhalf, t, u(\tau))
		\end{split}
	\end{equation}
with $\overline{C}_2 = C^2_\Upsilon(1 + C_I)^2$, having applied the approximation property~\ref{ass:GeneralApproximationProperty} of the projector $P_h$ at the end of the derivation.

Considering next the sum of normed residuals in \eqref{eq:thrmconvcoerc6} we observe that
	\begin{equation*}
	\begin{split}
		\norm{r_h^m}_{{V^\varrho_h}^\ast}^2 =&\ \norm{r_1^m + r_2^m + r_3^m}_{{V^\varrho_h}^\ast}^2
		\leq\ 4\left(\norm{r_1^m}_{{V^\varrho_h}^\ast}^2 + \norm{r_2^m}_{{V^\varrho_h}^\ast}^2 + \norm{r_3^m}_{{V^\varrho_h}^\ast}^2\right).
	\end{split}
	\end{equation*}
We insert the individual upper bounds for the normed residuals $\norm{r^m_1}_{{V^\varrho_h}^\ast}$, $\norm{r^m_2}_{{V^\varrho_h}^\ast}$ and $\norm{r^m_3}_{{V^\varrho_h}^\ast}$ that we have derived in Lemma~\ref{lem:UpperBoundsforresiduals} to find
	\begin{equation*}
	\begin{split}
		\frac{1}{4}\sum_{m=0}^{M-1} \norm{r_h^m}_{{V^\varrho_h}^\ast}^2 \leq&\ 
		 	C^2_{r_1}\sum_{m=0}^{M-1} {\begin{cases}
												 \Dt \int_{t^m}^{t^{m+1}} \norm{\ddot{u}(s)}^2_{{V^\varrho_h}^\ast} \d{s}, & \forall \theta \in [0,1]\\
												 \left(\Dt\right)^3 \int_{t^m}^{t^{m+1}} \norm{\dddot{u}(s)}^2_{{V^\varrho_h}^\ast}\d{s}, & \theta =\frac{1}{2}
				\end{cases}}\\
&\ \quad + C^2_{r_2}\sum_{m=0}^{M-1} \frac{1}{\Dt} \int_{t_m}^{t_{m+1}} \Upsilon^2(h, \orderAhalf, t, \dot{u}(\tau))\d{\tau}\\
		 &\ \quad + C^2_{r_3}\sum_{m=0}^{M-1} \Upsilon^2(h,\orderAhalf, t, u^{m+\theta})
    \end{split}
	\end{equation*}
	and thus															
    \begin{equation}
	\label{eq:thrmconvcoerc7}
	\begin{split}									
		\frac{1}{4}\sum_{m=0}^{M-1} \norm{r_h^m}_{{V^\varrho_h}^\ast}^2	
		\leq&\ 
		 	C^2_{r_1} {\begin{cases}
												 \Dt \int_0^T \norm{\ddot{u}(s)}^2_{{V^\varrho_h}^\ast} \d{s}, & \forall \theta \in [0,1]\\
												 \left(\Dt\right)^3 \int_0^T \norm{\dddot{u}(s)}^2_{{V^\varrho_h}^\ast}\d{s}, & \theta =\frac{1}{2}
												\end{cases}}\\
		 &\ \quad + C^2_{r_2} \frac{1}{\Dt} \int_0^T \Upsilon^2(h, \orderAhalf, t, \dot{u}(\tau))\d{\tau}\\
		 &\ \quad + C^2_{r_3}M\max\limits_{0\leq \tau\leq T} \Upsilon^2(h, \orderAhalf, t, u(\tau)),
	\end{split}
	\end{equation}
with positive constants $C_{r_1}, C_{r_2}, C_{r_3}$ defined in the Lemma. We return to \eqref{eq:thrmconvcoerc1.2} and invoke \eqref{eq:thrmconvcoerc5} and \eqref{eq:thrmconvcoerc6} to derive
	\begin{equation}
	\label{eq:thrmconvcoerc8}
	\begin{split}
		 \norm{u^M - u_h^M}_H^2 +&\ \Dt\sum_{m=0}^{M-1} \norm{u^{m+\theta} - u_h^{m+\theta}}_{a^{m+\theta}}^2\\
		\leq&\ 2\left(\norm{u^M - P_h u^M}_H^2 + \Dt \sum_{m=0}^{M-1} \norm{u^{m+\theta} - P_h u^{m+\theta}}_{a^{m+\theta}}^2 \right)\\
		&\ \qquad + 2\left(\norm{\xi_h^M}_H^2 + \Dt \sum_{m=0}^{M-1} \norm{\xi_h^{m+\theta}}_{a^{m+\theta}}^2\right)\\
		\leq&\ 2\overline{C}_1\,\max\limits_{0\leq\tau\leq T}\Upsilon^2(h, \orderAhalf, t, u(\tau))\\
		&\ + 2\max\left\{1,\frac{1}{C_1}\right\}\left(\norm{\xi_h^0}_H^2 + \Dt\, C_2 \sum_{m=0}^{M-1} \norm{r_h^m}_{{V^\varrho_h}^\ast}^2\right).
	\end{split}
	\end{equation}
Invoking our considerations for $\xi_h^0$ and the sum of normed residuals $r_h^m$ in \eqref{eq:thrmconvcoerc6.4} and \eqref{eq:thrmconvcoerc7} to deduce
	\begin{equation}
	\label{eq:thrmconvcoerc8.1}
	\begin{split}
		&\norm{u^M - u_h^M}_H^2 + \Dt\sum_{m=0}^{M-1} \norm{u^{m+\theta} - u_h^{m+\theta}}_{a^{m+\theta}}^2\\	
		\leq&\ 2\overline{C}_1\,\max\limits_{0\leq \tau\leq T}\Upsilon^2(h, \orderAhalf, t, u(\tau))\\
		&\ + 2\max\left\{1,\frac{1}{C_1}\right\}\Bigg(\overline{C}_2\max\limits_{0\leq\tau\leq T}\,\Upsilon^2(h, \orderAhalf, t, u(\tau))\\
		&\ \qquad\qquad\qquad\qquad + 4C_2\Bigg(C^2_{r_1} {\begin{cases}
												 (\Dt)^2 \int_0^T \norm{\ddot{u}(\tau)}^2_{{V^\varrho_h}^\ast} \d{\tau}, & \forall \theta \in [0,1]\\
												 \left(\Dt\right)^4 \int_0^T \norm{\dddot{u}(\tau)}^2_{{V^\varrho_h}^\ast}\d{\tau}, & \theta =\frac{1}{2}
												\end{cases}}\\
		 &\ \qquad\qquad\qquad\qquad\qquad\qquad + C^2_{r_2} \int_0^T \Upsilon^2(h, \orderAhalf, t, \dot{u}(\tau))\d{\tau}\\
		 &\ \qquad\qquad\qquad\qquad\qquad\qquad + C^2_{r_3}T\max\limits_{0\leq \tau\leq T} \Upsilon^2(h, \orderAhalf, t, u(\tau))\Bigg)\Bigg).
	\end{split}
	\end{equation}

For a notationally more satisfying result we define the constant
	\begin{equation}
	\label{eq:thrmconvcoerc9}
		\overline{C} = 2\max\left\{3\overline{C}_1,\ \max\left\{1,\frac{1}{C_1}\right\}\max\left\{3\overline{C}_2,\ 4C_2\max\left\{C_{r_1}^2,\ C_{r_2}^2,\ 3C_{r_3}^2T\right\}\right\}\right\}.
	\end{equation}
Clearly, $\norm{g}_{V^\varrho}=\norm{u(0)}_{V^\varrho}\leq\max\limits_{0\leq\tau\leq T}\norm{u(\tau)}_{V^\varrho}$. Thus, using \eqref{eq:thrmconvcoerc9} in \eqref{eq:thrmconvcoerc8.1} we get the estimate
\begin{equation}
	\label{eq:thrmconvcoerc10}
	\begin{split}
		\norm{u^M - u_h^M}^2 +\ \Dt\sum_{m=0}^{M-1}&\norm{u^{m+\theta} - u_h^{m+\theta}}_{a^{m+\theta}}^2\\
		\leq&\ \overline{C}\,\max\limits_{0\leq \tau\leq T}\Upsilon^2(h, \orderAhalf, t, u(\tau))\\
		+&\ \overline{C}\,{\begin{cases}
							(\Dt)^2 \int_0^T \norm{\ddot{u}(\tau)}^2_{{V^\varrho_h}^\ast} \d{\tau}, & \forall \theta \in [0,1]\\
							(\Dt)^4 \int_0^T \norm{\dddot{u}(\tau)}^2_{{V^\varrho_h}^\ast}\d{\tau}, & \theta =\frac{1}{2}\text{ and with \ref{enum:CoercCondConvuC3}}
						\end{cases}}\\
		 +&\ \overline{C}\,\int_0^T \Upsilon^2(h, \orderAhalf, t, \dot{u}(\tau))\d{\tau}
	\end{split}
	\end{equation}
which finishes the proof.
\end{proof}

\newpage
\bibliographystyle{elsarticle-harv}
\bibliography{LiteraturFKac}

\end{document}